\renewcommand{\theequation}{\arabic{section}.\arabic{equation}}
\def\vbar{\mathchoice{\vrule height6.3ptdepth-.5ptwidth.8pt\kern- .8pt}
{\vrule height6.3ptdepth-.5ptwidth.8pt\kern-.8pt} {\vrule
height4.1ptdepth-.35ptwidth.6pt\kern-.6pt} {\vrule
height3.1ptdepth-.25ptwidth.5pt\kern-.5pt}}
\def\<{\langle}
\def\>{\rangle}
\def\a{\alpha}
\def\b{\beta}
\def\c{\cdot}
\def\r{\rho}
\newtheorem{df}{Definition}[section]
\newtheorem{thm}{Theorem}[section]
\newtheorem{cor}{Corollary}[section]
\newtheorem{prop}{Proposition}[section]
\newtheorem{ex}{Example}[section]
\date{}
\begin{document}

\title{ \textbf{skew-supersymmetric solution of the  super Malcev  Yang-Baxter equation and Pre-Malcev superalgebras }}
\author{ F. Harrathi}
\author{{ Fattoum Harrathi\footnote {  E-mail: harrathifattoum285@gmail.com} }\\{}\\
{  University of Sfax, Faculty of Sciences Sfax,  BP
1171, 3038 Sfax, Tunisia}}
 \maketitle
\noindent\hrulefill

\noindent {\bf Abstract.}
The purpose of this paper is to introduce the notion of pre-Malcev superalgebras as the algebraic structure behind the super $\mathcal{O}$-operators on Malcev superalgebras.  Moreover, the relations among
Malcev superalgebras, pre-Malcev superalgebras and pre-alternative superalgebras
are established. Then, we study the operator forms of the classical Yang-Baxter equation (CYBE)
in Malcev superalgebras  and give their relationship
with super $\mathcal{O}$-operators. There are close relationships between the
CYBE in Malcev superalgebras and pre-Malcev superalgebras which can be interpreted
through the super $\mathcal{O}$-operators.

\noindent \hrulefill

{\bf Key words}: Malcev superalgebra, pre-Malcev superalgebra, super $\mathcal O$-operator, representation,
super Malcev Yang-Baxter equation.

{\bf Mathematics Subject Classification (2020)}: 17A70, 17B38.
 \normalsize\vskip0.5 cm


\tableofcontents
\section{
Introduction
}

 As a promotion of Lie algebra, the notion of Malcev algebras  were introduced in 1955  by A. I. Malcev \cite{Malcev55:anltcloops}, who called these objects Moufang-Lie algebras because of their connection with analytic Moufang loops.
Motivated by the study
of physics and the geometry of smooth loops, Malcev algebras have been widely used.
Similarly to the tangent algebra of a Lie group is a Lie algebra, the tangent algebra of
a locally analytic Moufang loop is a Malcev algebra, see\cite{kerdman}.  A Malcev algebra is a non-associative algebra with a skew-symmetric multiplication that satisfies the Malcev identity. That is to say, a vector space $A$ with an anti-symmetric multiplication $[\c,\c]$ that satisfies, for all $x,y,z \in A$, the Malcev identity
\begin{equation}
\label{malcev}
J(x,y,[x,z]) = [J(x,y,z),x],
\end{equation}
where $J(x,y,z) = [[x,y],z] + [[z,x],y] + [[y,z],x]$ is the Jacobian \cite{Sagle}. Other influential work includes \cite{Abd el Malek,Albuquerque,kuzmin68:Malcevalgrepr,Mahaligesshwara}.

Pre-Malcev algebras have been studied extensively since \cite{Madariaga} which are the generalization of
pre-Lie algebras, in the sense that any pre-Lie algebra is a pre-Malcev algebra but the converse
is not true. Studying pre-Malcev algebras independently is significant not only to its own further
development, but also to develop the areas closely connected with such algebras. A pre-Malcev algebra is a vector space $A$ endowed with a bilinear
product $\cdot$ satisfying the following identity
 for $x, y, z, t \in A$,
 \begin{equation}\label{PMALG}
 [y, z] \cdot (x \cdot t)+ [[x, y], z] \cdot t+ y \cdot ([x, z] \cdot t)- x \cdot (y \cdot (z \cdot t)) + z \cdot (x \cdot (y \cdot t))=0,
\end{equation}
where $[x,y]=x\cdot y-y\cdot x.$

The existence of sub-adjacent Malcev algebras and compatible pre-Malcev algebras was given in \cite[Proposition 5]{Madariaga}. For a given pre-Malcev algebra $(A, \cdot)$, there is
a Malcev algebra $A^{C}$ defined by the commutator $[x, y] = x \cdot y - y\cdot x$, and the left multiplication
operator in $A$ induces a representation of Malcev algebra $A^{C}$.

In paper \cite{Berezin}, as Lie algebras of some generalized groups, Lie superalgebras are introduced.
In 1977, Kac deal as applications, Li prove that certain lowest weight modules for
some well-known infinite-dimensional Lie algebras or Lie superalgebras have natural vertex
operator superalgebra structures in \cite{Li}. Then as a natural generalization of Malcev
algebras and Lie superalgebras, the theory of Malcev superalgebras has quite developed.
In particular, Lie superalgebras are examples of Malcev superalgebras. And the definitions
and basic facts of the theory of Malcev superalgebras can be found in  \cite{Albuquerque1,Shestakov}.

Classical Yang-Baxter equation (CYBE) first arose in the study of inverse scattering
theory (see \cite{Faddeev,Faddeev1}). It has a profound connection with many branches of
mathematical physics and pure mathematics (\cite{BelavinDrinfeld}, etc). In particular, CYBE can
be regarded as a “classical limit” of quantum Yang-Baxter equation, which
plays an important role in the study of classical integrable system (\cite{Semenov}). The
generalization of the ordinary CYBE in the super case, namely, the CYBE in Lie
superalgebras, or the graded CYBE (\cite{Gould,Zhang}), or the super CYBE (\cite{Andruskiewitsch}), has been
studied widely since 1990s. One of the motivations to study such a structure is
its relationship with the solutions of quantum Yang-Baxter equation associated to quantum
supergroups or quantum superalgebras or quantized universal enveloping
superalgebras (\cite{Bracken,Floreanini,Yamane,Yamane1}). In fact, many important results on this topic have already
obtained like the close relationships between the CYBE in Lie superalgebras and
Lie bi-superalgebras (\cite{Gould}), Poisson-Lie supergroups (\cite{Andruskiewitsch}) and so on.

In this paper, super Malcev Yang-Baxter equation (Super MYBE) in Malcev superalgebras and super $\mathcal{O}$-operators are
introduced. We may concentrate on the case that $|r| = 0 $ (i.e., $r$ is even). It is hard to
study the case when $r$ is odd. Moreover, we exploit pre-Malcev superalgebras as super-versions of  pre-Malcev
algebras and study their relations with
Malcev superalgebras and pre-alternative superalgebras. The paper is organized as follows. In Section 2,
we recall some basic facts on super vector spaces and  Malcev superalgebras, then present some fundamental results on representations
of Malcev superalgebras, especially focusing on adjoint
and co-adjoint representations.  In Section 3,
we introduce the  super $\mathcal{O}$-operators and pre-Malcev superalgebras and give their relations with super $\mathcal{O}$-operators, Malcev superalgebras and pre-alternative superalgebras. In Section 4, we  construct
a direct relation between super $\mathcal{O}$-operator and super Malcev Yang-Baxter equation and
show that pre-Malcev algebras are the natural underlying structures.

\renewcommand{\theequation}{\thesection.\arabic{equation}}
In this paper, we  consider all vector space are 
finite dimensional   over an algebraically closed commutative
field $\mathbb{K}$ of characteristic zero.
\section{Some basic properties of Malcev superalgebras}
 In this section we presents fundamental concepts and develop some helpful results that we will use
later. 
Let $V$ be a vector space over a field $\mathbb{K}$. The space $V$ is called a super (i.e.,
$\mathbb{Z}_{2}$-graded) vector space if $V = V _{\bar{0}}\oplus V_{\bar{1}}$.
The elements in $V_{\bar{0}} \cup V_{\bar{1}}$ are called homogeneous. Denoted by $\mathcal{H}(V)$ the set of homogenous elements of $V$. We use the
expression $|x|$ to denote the parity index of the homogeneous element $x$, where
$$|x| =\left\{
         \begin{array}{ll}
           0, & \hbox{$x\in V_{\bar{0}}$ ;} \\
           1, & \hbox{$x\in V_{\bar{1}}$.}
         \end{array}
       \right.
$$
It is assumed in this paper that all elements are the homogeneous of their
corresponding super vector spaces.

\begin{df}
A left alternative superalgebra is a $\mathbb{Z}_2$-graded vector space $A=A_{\bar{0}}\oplus A_{\bar{1}}$ equipped with a bilinear product $\star:A\times A\to A$  obeying $A_{\alpha}\star A_{\beta} \subset A_{\alpha+\beta}$, $\forall \alpha,\beta \in\mathbb{Z}_2$ and the left alternative super identity,
\begin{align}
&as(x, y, z)+(-1)^{|x||y|}as(y, x, z)=0,  \quad \forall \ x, y, z\in A.    \label{lefthomalternativesuperidentity}
\end{align}
where $as(x, y, z)=(x\star y) \star z-x\star (y\star z)$ is the associator.
\end{df}
\begin{df}
A right alternative superalgebra is a $\mathbb{Z}_2$-graded vector space $A=A_{\bar{0}}\oplus A_{\bar{1}}$ equipped with a bilinear product $\star:A\times A\to A$  obeying $A_{\alpha}\star A_{\beta} \subset A_{\alpha+\beta}$, $\forall \alpha,\beta \in\mathbb{Z}_2$ and the right alternative super identity
\begin{align}
&as(x, y, z)+(-1)^{|y||z|}as(x, z, y)=0,  \quad \forall \ x, y, z\in A.   \label{righthomalternativesuperidentity}
\end{align}
\end{df}

\begin{df}
An alternative superalgebra is one which is both left and right alternative superalgebra.
\end{df}

\begin{df}
A Malcev superalgebra is a $\mathbb{Z}_2$-graded vector space $A = A_{\bar{0}}\oplus A_{\bar{1}}$ over a field $\mathbb{K}$ equipped
with a $\mathbb{K}$-bilinear map $[\c,\c] : A\times A \longrightarrow A$ satisfying $[A_{\alpha}, A_{\beta}] \subset A_{\alpha+\beta}$, $\forall \alpha,\beta \in\mathbb{Z}_2$ and
the following conditions
\begin{enumerate}
 \item[(i)] $[x,y]=-(-1)^{|x||y|} [y, x],$
  \item[(ii)]
$(-1)^{|y||z|}[[x, z], [y, t]] = [[[x, y], z], t] +(-1)^{|x|(|y|+|z|+|t|)} [[[y, z], t], x]$\\
$\qquad\qquad + (-1)^{(|x|+|y|)(|z|+|t|)} [[[z, t], x], y] + (-1)^{|t|(|x|+|y|+|z| )} [[[t, x], y], z],$
\end{enumerate}
for all $x, y, z,t \in \mathcal{H}(A)$. An element of $A_{\bar{0}}$ is called even and an element of $A_{\bar{1}}$ is called odd.
 \label{df:malcevsuperalg}
\end{df}
\begin{thm}\label{thm:alttomalc}
Let  $(A,\star)$ be  an alternative superalgebra. Then $(A, [ \c,\c ])$ is a Malcev admissible superalgebra, where
\begin{align*}
[x, y] = x\star y - (-1)^{|x||y|}y\star x,\quad \forall x,y\in \mathcal{H}(A).
\end{align*}
\end{thm}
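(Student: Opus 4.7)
The plan is to verify conditions (i) and (ii) of Definition \ref{df:malcevsuperalg} directly from the definition of the bracket $[x,y] = x\star y - (-1)^{|x||y|} y\star x$ and the super alternativity axioms \eqref{lefthomalternativesuperidentity} and \eqref{righthomalternativesuperidentity}.

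Condition (i), super skew-symmetry, is immediate: swapping $x$ and $y$ in the defining formula gives $[y,x] = y\star x - (-1)^{|x||y|}x\star y = -(-1)^{|x||y|}[x,y]$, since $(-1)^{|x||y|}\cdot(-1)^{|x||y|}=1$.

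For condition (ii), the super Malcev identity, the first preparatory step is to establish that the associator $as(x,y,z) = (x\star y)\star z - x\star(y\star z)$ is super skew-symmetric in all three arguments of $S_3$. Left alternativity \eqref{lefthomalternativesuperidentity} gives $as(x,y,z) = -(-1)^{|x||y|}as(y,x,z)$; right alternativity \eqref{righthomalternativesuperidentity} gives $as(x,y,z) = -(-1)^{|y||z|}as(x,z,y)$. Combining these two transpositions generates the full super $S_3$-antisymmetry, and in particular yields the super-Jacobi-like identity $as(x,y,z) + (-1)^{|x|(|y|+|z|)}as(y,z,x) + (-1)^{|z|(|x|+|y|)}as(z,x,y) = 0$, which in the super-commutator language reads as a controlled super-Jacobian. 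Concretely, if $J(x,y,z)$ denotes the super-Jacobian of the bracket $[\cdot,\cdot]$, one shows by expansion that $J(x,y,z) = 6\,as(x,y,z)$ (or some comparable multiple depending on the convention), so $J$ is itself super skew-symmetric in $x,y,z$.

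With this in hand, the super Malcev identity (ii) is proved by expanding $[[[x,z],[y,t]]$ and the four right-hand terms $[[[x,y],z],t]$, $[[[y,z],t],x]$, $[[[z,t],x],y]$, $[[[t,x],y],z]$ in terms of $\star$, and grouping the result by the associators that appear. Because $J$ is expressible purely through associators of $(A,\star)$, and associators in an alternative superalgebra are super-alternating, each term on either side reduces to signed associators on subsets of $\{x,y,z,t\}$; the required equality then becomes a super-antisymmetry relation among $4$-argument expressions $as([x,y],z,t)$, $as(x,[y,z],t)$ etc., which follow from the $S_3$-antisymmetry of $as$ combined with the defining relations of the commutator.

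The main obstacle is the sign bookkeeping: the super Malcev identity involves four variables and already contains five Koszul-sign-weighted quadruple brackets, so a direct expansion produces many dozens of $\star$-monomials. The strategy to keep this manageable is to work at the level of the super-Jacobian $J$ rather than term-by-term: first rewrite the identity (ii) as the super-analog $J(x,y,[x,z]) = [J(x,y,z),x]$ (with appropriate Koszul signs from the substitution $t \mapsto [x,z]$), then use $J = 6\,as$ to reduce everything to a statement about associators, which is verified by the super $S_3$-antisymmetry of the associator established in the preparatory step. This mirrors the classical (ungraded) Bruck--Kleinfeld route from alternative to Malcev algebras, with Koszul signs inserted at each transposition.
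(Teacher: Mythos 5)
The paper states Theorem \ref{thm:alttomalc} without proof, so there is no in-text argument to compare against; judged on its own, your proposal contains one false intermediate identity and, more seriously, a genuine gap at its central step. The false identity first: you correctly derive from \eqref{lefthomalternativesuperidentity} and \eqref{righthomalternativesuperidentity} that the associator is super-alternating (the two adjacent transpositions generate $S_3$), but then a cyclic permutation has signature $+1$, so $as(y,z,x)=(-1)^{|x|(|y|+|z|)}as(x,y,z)$ and your displayed sum $as(x,y,z)+(-1)^{|x|(|y|+|z|)}as(y,z,x)+(-1)^{|z|(|x|+|y|)}as(z,x,y)$ equals $3\,as(x,y,z)$, not $0$; as written it would force $as=0$, i.e.\ associativity. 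The relation you actually go on to use, namely that the super-Jacobian equals $6\,as(x,y,z)$ (the Koszul-signed alternating sum over $S_3$ collapsing to six equal terms), is correct.

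The real gap is the claim that, once $J=6\,as$ is in hand, the super Malcev identity ``follows from the $S_3$-antisymmetry of $as$ combined with the defining relations of the commutator.'' It does not. In the two-variable (Sagle) form the identity to be proved reduces, after dividing by $6$, to an equation of the shape $as(x,y,x\star z)-(\pm)as(x,y,z\star x)=as(x,y,z)\star x-(\pm)\,x\star as(x,y,z)$ with appropriate Koszul signs: every term is either an associator one of whose slots is a $\star$-product, or a $\star$-product one of whose factors is an associator. The three-variable alternation of $as$ gives no information whatsoever about such expressions. What is required is the super analogue of the linearized Moufang identities, equivalently the fact that the four-variable Kleinfeld function $f(x,y,z,w)=as(x\star y,z,w)-x\star as(y,z,w)-(\pm)\,as(x,z,w)\star y$ is itself super-alternating in all four arguments; this is a further, nontrivial consequence of alternativity that must be established by its own computation and is precisely the hard part of Malcev's and Sagle's classical proofs. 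Your proposal never produces these identities, so the expansion does not close. (Passing from Definition \ref{df:malcevsuperalg}(ii) to the two-variable Sagle form by polarization is legitimate over a field of characteristic zero, so that part of your reduction is acceptable.)
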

Let $A$ be any superalgebra and $V$ be a $\mathbb{Z}_2$-graded vector space. The space $gl(V )$
consisting of all the linear transformations on $V$ has a natural $\mathbb{Z}_2$-gradation as
\begin{equation}\label{lineartransformation}
    gl(V )_{\alpha} = \{f\in  gl(V )| f(V_{\alpha})\subseteq V_{\alpha+ \beta}, \alpha,\beta\in \mathbb{Z}_2 \}.
\end{equation}
\begin{df}
A bimodule over an alternative superalgebra $(A, \star)$ consists of a $\mathbb{Z}_2$-graded linear space $V$ and two even bilinear maps $\mathfrak{l},\mathfrak{r}:A \to gl(V)$
such that, for any homogeneous elements $x, y\in A$,
\begin{align}
\label{rephomalt1}&\mathfrak{l}(x\star y)+(-1)^{|x||y|}\mathfrak{l}(y\star x)-\mathfrak{l}(x)\mathfrak{l}(y)-(-1)^{|x||y|}\mathfrak{l}(y)\mathfrak{l}(x)=0,\\
\label{rephomalt2}&\mathfrak{r}(y)\mathfrak{r}(x)+(-1)^{|x||y|}\mathfrak{r}(x)\mathfrak{r}(y)-\mathfrak{r}(x\star y)-(-1)^{|x||y|}\mathfrak{r}(y\star x)=0,\\
\label{rephomalt3}&\mathfrak{r}(y)\mathfrak{r}(x)+(-1)^{|x||y|}\mathfrak{r}(y)\mathfrak{l}(x)-(-1)^{|x||y|}\mathfrak{l}(x)\mathfrak{r}(y)-\mathfrak{r}(x\star y)=0,\\
\label{rephomalt4}&\mathfrak{r}(y)\mathfrak{l}(x)+(-1)^{|x||y|}\mathfrak{l}(x\star y) - (-1)^{|x||y|}\mathfrak{l}(x)\mathfrak{l}(y)-\mathfrak{l}(x)\mathfrak{r}(y)=0.
\end{align}
\end{df}
\begin{prop}\label{semidirectproduct alt}
$(V,\mathfrak{l},\mathfrak{r})$ is a bimodule of an alternative superalgebra $(A, \star)$ if and only if the direct
sum $A\oplus V$ of vector spaces is turned into an alternative superalgebra (the semidirect sum) by defining
multiplication in $A \oplus V$ by
\begin{equation}
(x + a)\circ(y+ b) = x \star y + \mathfrak{l}(x)b +
\mathfrak{r}(y)a,\quad \forall x,y\in \mathcal{H}(A), a, b \in \mathcal{H}(V).
\end{equation}
We denote it by $A \ltimes_{\mathfrak{l},\mathfrak{r}} V$ or simply $A \ltimes V$.
\end{prop}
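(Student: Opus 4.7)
The plan is to verify that the left and right alternative super identities for $\circ$ on $A \oplus V$, after a case analysis on where the components of $X,Y,Z$ live, unpack into exactly the four defining relations of a bimodule. By the multilinearity of the axioms and the fact that the formula for $\circ$ contains no $V$--$V$ term (so that $V\circ V=0$), it suffices to verify the alternative super identities for each of the eight homogeneous choices $(X,Y,Z)$ with each entry either purely in $A$ or purely in $V$. The case where all three lie in $A$ reduces to alternativity of $(A,\star)$; the cases where at least two entries lie in $V$ vanish termwise. The nontrivial content is concentrated in the three cases where exactly one argument lies in $V$.

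First I would compute the associator $as_{\circ}(X,Y,Z) = (X\circ Y)\circ Z - X\circ (Y\circ Z)$ in each of these three cases:
\begin{align*}
as_{\circ}(x,y,c) &= \bigl[\mathfrak{l}(x\star y) - \mathfrak{l}(x)\mathfrak{l}(y)\bigr]c, \\
as_{\circ}(a,y,z) &= \bigl[\mathfrak{r}(z)\mathfrak{r}(y) - \mathfrak{r}(y\star z)\bigr]a, \\
as_{\circ}(x,b,z) &= \bigl[\mathfrak{r}(z)\mathfrak{l}(x) - \mathfrak{l}(x)\mathfrak{r}(z)\bigr]b,
\end{align*}
with the analogous expressions for the other orderings obtained by swapping. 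Substituting into the left alternative super identity in the first case yields exactly (\ref{rephomalt1}), and substituting into the right alternative super identity in the second case yields exactly (\ref{rephomalt2}). Placing the $V$-entry in the middle slot and applying the left, respectively right, alternative super identity then produces (\ref{rephomalt3}) and (\ref{rephomalt4}). Reading these equivalences in reverse delivers the converse direction.

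The main obstacle will be the bookkeeping of the super signs $(-1)^{|\cdot||\cdot|}$. In particular, when the $V$-entry occupies the middle slot, the sign $(-1)^{|X||Y|}$ produced by the alternativity axiom becomes $(-1)^{|x||b|}$, and one must verify that, as $b$ ranges over homogeneous elements of $V$, the resulting family of scalar identities combines operator-theoretically into the two-variable axiom (\ref{rephomalt3}) (and similarly (\ref{rephomalt4}) from the right alternative side), whose exponent involves $|x||z|$. Once the sign conventions are aligned and the three mixed cases are checked, the equivalence is established; the structural argument mirrors the classical (ungraded) proof for alternative bimodules with super-sign tracking in place of the ordinary commutative manipulations.
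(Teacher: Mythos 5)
The paper states this proposition without proof, so there is nothing to compare your route against; I can only assess the attempt on its own terms. Your structural plan is the right one and is essentially forced: multilinearity plus the absence of a $V$--$V$ term reduces everything to the three mixed cases, your three associator formulas
$as_{\circ}(x,y,c)=[\mathfrak{l}(x\star y)-\mathfrak{l}(x)\mathfrak{l}(y)]c$, $as_{\circ}(a,y,z)=[\mathfrak{r}(z)\mathfrak{r}(y)-\mathfrak{r}(y\star z)]a$, $as_{\circ}(x,b,z)=[\mathfrak{r}(z)\mathfrak{l}(x)-\mathfrak{l}(x)\mathfrak{r}(z)]b$ are correct, and the first two cases do deliver \eqref{rephomalt1} and \eqref{rephomalt2} exactly, since there the alternativity sign involves only degrees of algebra elements.

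The gap is that the step you defer as ``bookkeeping'' is the entire content of the super version, and as stated it does not close. With the $V$-entry in the middle slot, left alternativity for the homogeneous triple $(x,b,z)$ gives
\begin{equation*}
\bigl[\mathfrak{r}(z)\mathfrak{l}(x)-\mathfrak{l}(x)\mathfrak{r}(z)\bigr]b+(-1)^{|x||b|}\bigl[\mathfrak{r}(z)\mathfrak{r}(x)-\mathfrak{r}(x\star z)\bigr]b=0,
\end{equation*}
whereas \eqref{rephomalt3} (with $y$ renamed $z$, applied to $b$ and multiplied by $(-1)^{|x||z|}$) reads the same but with $(-1)^{|x||z|}$ in place of $(-1)^{|x||b|}$; the analogous mismatch occurs between right alternativity and \eqref{rephomalt4}, where the semidirect product produces $(-1)^{|z||b|}$ against the axiom's $(-1)^{|x||z|}$. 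Since $|b|$ ranges independently of $|x|$ and $|z|$, the family of scalar identities you obtain does \emph{not} ``combine operator-theoretically'' into the two-variable axioms as printed: on $V_{\bar 1}$ the two conditions impose genuinely different relations (e.g.\ for $|x|=1$, $|z|=0$ they differ by the sign of the $\mathfrak{r}(z)\mathfrak{r}(x)-\mathfrak{r}(x\star z)$ term) unless the relevant operators vanish there. So either the equivalence must be proved against bimodule axioms restated pointwise on homogeneous $b\in V$ with the sign $(-1)^{|x||b|}$, or the definition's signs must be corrected; in either case your proof cannot be completed in the form announced, and the verification you postponed is where it fails.
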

\begin{df}
Let $A$ be a Malcev superalgebra.
\begin{enumerate}
   \item [(i)] Let $V$ be a $\mathbb{Z}_2$-graded vector space. An even linear map  $\rho : A\to gl(V)$ (i.e., $|\r| = 0$) is a Malcev representation of $A$ on $V$ such that for any $x, y, z \in \mathcal{H}(A)$,
\begin{eqnarray}
\rho([[x, y], z])&=& \rho(x)\rho(y)\rho(z) - (-1)^{|z|(|x|+|y|)} \rho(z)\rho(x)\rho(y)\nonumber\\
&+&(-1)^{|x|(|y|+|z|)} \rho(y)\rho([z, x])\nonumber\\
&-& (-1)^{|x|(|y|+|z|)} \rho([y, z])\rho(x).
\end{eqnarray}
   \item [(ii)]Consider two Malcev representations  $\rho : A\to gl(V )$ and $\rho':A\to gl(V')$ of $A$, where $V$ and $V'$ are $\mathbb{Z}_2$-graded vector spaces.
We say that $\rho$  and $\rho'$ are equivalent if there exists a bijective even
linear map $\phi : V \to V'$ such that $\phi\circ \rho(x) = \rho'(x)\circ \phi, \forall x\in \mathcal{H}(A).$
\end{enumerate}
\label{df:superrepresentation}
\end{df}
\begin{prop}\label{semidirectmalcsuper}
$(V,\rho)$ is a representation of a Malcev superalgebra $A$ if and only if
 the direct sum $A\oplus V$ of vector spaces is a Malcev superalgebra by defining a map on $A\oplus V$ by
 \begin{equation}\label{sdp.2}
 [x+a,y+b]_{A\oplus V}:=[x,y]+\rho(x)b-(-1)^{|x||y|}\rho(y)a,
 \end{equation}
for all $x, y\in \mathcal{H}(A), a, b\in \mathcal{H}(V).$
In this case, $A\oplus V$ is called the \textit{semi-direct product} of $A$ and $V$, denoted by $A\ltimes_{\rho} V$ or simply $A\ltimes V$.
\end{prop}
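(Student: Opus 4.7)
The plan is to verify the two defining conditions of a Malcev superalgebra for $A\oplus V$ equipped with the bracket (\ref{sdp.2}) and to observe that each of them is equivalent to a defining condition of $(V,\rho)$.

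Skew-supersymmetry is immediate: for homogeneous $x+a,\,y+b\in A\oplus V$ (necessarily with $|x|=|a|$ and $|y|=|b|$), the $A$-component of $[x+a,y+b]_{A\oplus V}+(-1)^{|x+a||y+b|}[y+b,x+a]_{A\oplus V}$ vanishes by skew-supersymmetry of $[\cdot,\cdot]_A$, and the two $V$-contributions cancel directly after the Koszul factor is applied. No hypothesis on $\rho$ is used at this step.

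For the Malcev super identity of Definition \ref{df:malcevsuperalg}(ii), I use $\mathbb{Z}_2$-multilinearity in each of the four slots to reduce to testing on $4$-tuples whose entries lie purely in $A$ or purely in $V$. The formula (\ref{sdp.2}) gives $[V,V]_{A\oplus V}=0$ together with $[A,V]_{A\oplus V},\,[V,A]_{A\oplus V}\subseteq V$; hence any iterated bracket of four homogeneous entries that contains two or more $V$-entries eventually encounters a sub-bracket $[V,V]$ and vanishes. Both sides of the identity are therefore zero in these configurations. The remaining cases are: (a) all four arguments in $A$, which is the Malcev super identity for $A$; and (b) exactly one argument in $V$, with four positional sub-cases.

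The heart of the argument is (b). Fixing, say, $t=v\in\mathcal{H}(V)$ and $x,y,z\in\mathcal{H}(A)$, I unfold every iterated bracket by repeated use of (\ref{sdp.2}). Each term of the identity becomes one of $(\text{sign})\cdot\rho(\cdot)\rho(\cdot)\rho(\cdot)\,v$, $(\text{sign})\cdot\rho(\cdot)\rho([\cdot,\cdot])\,v$, or $(\text{sign})\cdot\rho([[\cdot,\cdot],\cdot])\,v$. Simplifying Koszul signs modulo~$2$ and cancelling the arbitrary $v\in V$ yields precisely the representation axiom of Definition \ref{df:superrepresentation}(i) applied to $(x,y,z)$. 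The three remaining positions of the $V$-entry reduce to this one via skew-supersymmetry of $[\cdot,\cdot]_{A\oplus V}$, yielding the same axiom. Reading the same computation backwards shows that the representation axiom forces the Malcev super identity on $A\oplus V$, which completes the ``iff''. The principal obstacle is the bookkeeping of Koszul signs in this final step: each depth-three bracket passes through (\ref{sdp.2}) two or three times, producing nested sign factors that must be simplified and matched term by term with the four summands of Definition \ref{df:superrepresentation}(i).
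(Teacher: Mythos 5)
Your proof is correct and follows essentially the same route as the paper's: both reduce the Malcev super identity on $A\oplus V$ to the representation axiom for $\rho$, after observing that $[V,V]_{A\oplus V}=0$ kills every configuration with two or more $V$-entries. The only difference is organizational --- you invoke multilinearity and the cyclic symmetry of the identity to treat a single position of the $V$-entry, whereas the paper expands all five quadruple brackets with general mixed arguments $x+a,\dots,t+d$ and reads off the coefficients of $a,b,c,d$; the underlying computation is identical.
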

Note that $(A\oplus V)_{\overline{0}}=A_{\overline{0}}\oplus V_{\overline{0}}$, implying that if $x + a \in \mathcal{H}(A\oplus V)$, then $|x + a| = |x| = |a|.$
\begin{proof}
For all $x,y,z,t\in \mathcal{H}(A)$ and $a,b,c,d\in \mathcal{H}(V)$, we have
\begin{align*}
 &[[x+a,z+c]_{A\oplus V},[y+b,t+d]_{A\oplus V}]_{A\oplus V}\\
 &=[[x,z],[y,t]]+\rho([x,z])\rho(y)d-(-1)^{|y||t|}\rho([x,z])\rho(t)b\\
 &\quad-(-1)^{(|x|+|z|)(|y|+|t|)}\rho([y,t])\rho(x)c +(-1)^{(|x|+|z|)(|y|+|t|)+|x||z|}\rho([y,t])\rho(z)a,
\\[0.2cm]
&[[[x+a,y+b]_{A\oplus V},z+c]_{A\oplus V},t+d]_{A\oplus V}\\
&=[[[x,y],z],t]+\rho([[x, y], z])d-(-1)^{|t|(|x|+|y|+|z|)}\rho(t)\rho([x,y])c\\
&\quad+(-1)^{|t|(|x|+|y|+|z|)+|z|(|x|+|y|)}\rho(t)\rho(z)\rho(x)b-(-1)^{|t|(|x|+|y|+|z|)+|z|(|x|+|y|)+|x||y|}\rho(t)\rho(z)\rho(y)a,
\\[0.2cm]
&[[[y+b,z+c]_{A\oplus V},t+d]_{A\oplus V},x+a]_{A\oplus V}\\
&=[[[y,z],t],x]+\rho([[y, z], t])a-(-1)^{|x|(|y|+|z|+|t|)}\rho(x)\rho([y,z])d\\
&\quad+(-1)^{|x|(|y|+|z|+|t|)+|t|(|y|+|z|)}\rho(x)\rho(t)\rho(y)c-(-1)^{|x|(|y|+|z|+|t|)+|t|(|y|+|z|)+|y||z|}\rho(x)\rho(t)\rho(z)b,
\\[0.2cm]
&[[[z+c,t+d]_{A\oplus V},x+a]_{A\oplus V},y+b]_{A\oplus V}\\
&=[[[z,t],x],y]+\rho([[z, t], x])b-(-1)^{|y|(|x|+|z|+|t|)}\rho(y)\rho([z,t])a\\
&\quad+(-1)^{|y|(|x|+|z|+|t|)+|x|(|z|+|t|)}\rho(y)\rho(x)\rho(z)d-(-1)^{|y|(|x|+|z|+|t|)+|x|(|z|+|t|)+|z||t|}\rho(y)\rho(x)\rho(t)c,
\\[0.2cm]
&[[[t+d,x+a]_{A\oplus V},y+b]_{A\oplus V},z+c]_{A\oplus V}\\
&=[[[t,x],y],z]+\rho([[t, x], y])c-(-1)^{|z|(|x|+|y|+|t|)}\rho(z)\rho([t,x])b\\
&\quad+(-1)^{|z|(|x|+|y|+|t|)+|y|(|x|+|t|)}\rho(z)\rho(y)\rho(t)a-(-1)^{|z|(|x|+|y|+|t|)+|y|(|x|+|t|)+|x||t|}\rho(z)\rho(y)\rho(x)d.
\end{align*}
Then, $(A\oplus V,[\c,\c]_{A\oplus V})$  is a Malcev superalgebra if and only if
\begin{eqnarray*}
\rho([[x, y], z])&=& \rho(x)\rho(y)\rho(z) - (-1)^{|z|(|x|+|y|)} \rho(z)\rho(x)\rho(y)\\
&+&(-1)^{|x|(|y|+|z|)} \rho(y)\rho([z, x])\\
&-& (-1)^{|x|(|y|+|z|)} \rho([y, z])\rho(x).
\qedhere\end{eqnarray*}
\end{proof}

\begin{prop}
Let $(V,\mathfrak{l},\mathfrak{r})$ be a bimodule of  an alternative superalgebra $(A,\star)$. Then,  $(V,\mathfrak{l}-(-1)^{|x||y|}\mathfrak{r})$ is a representation of the Malcev admissible  superalgebra $(A,[\c,\c])$ defined in Theorem \ref{thm:alttomalc}.
\end{prop}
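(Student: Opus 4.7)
The plan is to reduce the statement to the two semi-direct product constructions already at our disposal—Proposition \ref{semidirectproduct alt} on the alternative side and Proposition \ref{semidirectmalcsuper} on the Malcev side—together with Theorem \ref{thm:alttomalc}, so that no direct verification of the four-variable Malcev identity has to be carried out. First I would make precise what the expression ``$\mathfrak{l}-(-1)^{|x||y|}\mathfrak{r}$'' in the statement should mean in order to define an honest even linear map $\rho:A\to gl(V)$: namely
\[
\rho(x)(a):=\mathfrak{l}(x)(a)-(-1)^{|x||a|}\mathfrak{r}(x)(a),\qquad x\in\mathcal H(A),\ a\in\mathcal H(V).
\]

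Next I would apply Proposition \ref{semidirectproduct alt} to form the alternative superalgebra $(A\oplus V,\circ)$ with product $(x+a)\circ(y+b)=x\star y+\mathfrak{l}(x)b+\mathfrak{r}(y)a$, and then invoke Theorem \ref{thm:alttomalc} to endow it with the Malcev commutator $[X,Y]_\circ:=X\circ Y-(-1)^{|X||Y|}Y\circ X$. The key computation is to expand this commutator on homogeneous elements $x+a,y+b$ (so $|a|=|x|$, $|b|=|y|$) and regroup:
\[
[x+a,y+b]_\circ=[x,y]+\bigl(\mathfrak{l}(x)-(-1)^{|x||b|}\mathfrak{r}(x)\bigr)b-(-1)^{|x||y|}\bigl(\mathfrak{l}(y)-(-1)^{|y||a|}\mathfrak{r}(y)\bigr)a=[x,y]+\rho(x)b-(-1)^{|x||y|}\rho(y)a.
\]
This is exactly the bracket of the Malcev semi-direct product $A\ltimes_\rho V$ appearing in Proposition \ref{semidirectmalcsuper}.

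Finally, since $(A\oplus V,[\cdot,\cdot]_\circ)$ is a Malcev superalgebra by Theorem \ref{thm:alttomalc}, the ``only if'' half of Proposition \ref{semidirectmalcsuper} forces $(V,\rho)$ to be a representation of the Malcev admissible superalgebra $(A,[\cdot,\cdot])$, which is precisely what we want. The only real obstacle in this approach is careful bookkeeping of the Koszul signs when identifying $[x+a,y+b]_\circ$ with the Malcev semi-direct bracket—once that identification is in hand, the representation axiom of Definition \ref{df:superrepresentation} comes for free from the iff statement of Proposition \ref{semidirectmalcsuper}, so no new four-variable super Malcev identity has to be checked by hand.
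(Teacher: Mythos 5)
Your proposal is correct and follows essentially the same route as the paper: form the alternative semidirect product $A\ltimes_{\mathfrak{l},\mathfrak{r}}V$, pass to its associated Malcev superalgebra via Theorem \ref{thm:alttomalc}, identify the resulting bracket with the semidirect-product bracket of Proposition \ref{semidirectmalcsuper} for $\rho=\mathfrak{l}-(-1)^{|x||y|}\mathfrak{r}$, and conclude by the ``only if'' direction of that proposition. Your explicit formula $\rho(x)(a)=\mathfrak{l}(x)(a)-(-1)^{|x||a|}\mathfrak{r}(x)(a)$ and the sign bookkeeping match the paper's computation.
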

\begin{proof}
By Proposition \ref{semidirectproduct alt}, $A \ltimes_{\mathfrak{l},\mathfrak{r}} V$ is an alternative superalgebra.
Consider  its associated Malcev  superalgebra $(A \oplus V, \overbrace{[\c, \c]})$,
\begin{align*}
 \overbrace{[x+a, y+b]}=&(x+a)\circ(y+b) -(-1)^{|x||y|}(y+b)\circ (x+a)\\
=& x\star y+ \mathfrak{l}(x)b+ \mathfrak{r}(y)a - (-1)^{|x||y|} y\star x -(-1)^{|a||y|}\mathfrak{l}(y)a -(-1)^{|x||b|} \mathfrak{r}(x)b \\
=&[x, y] + (\mathfrak{l}-(-1)^{|x||b|}\mathfrak{r})(x)b - (-1)^{|a||y|}(\mathfrak{l} -(-1)^{|a||y|}\mathfrak{r})(y)a.
\end{align*}
According to Proposition \ref{semidirectmalcsuper}, $(V,\mathfrak{l}-(-1)^{|x||y|}\mathfrak{r})$ is a representation of $(A,[\c, \c])$.
\end{proof}
\begin{ex}
Let $A = A_{\bar{0}}\oplus A_{\bar{1}}$ be a Malcev superalgebra. It is easy
to see that the map $ad : A\to gl(A)$ defined by $ad(x)(y ) = [x, y] ,
\forall x,y\in \mathcal{H}(A)$, is a Malcev representation of $A$ in itself. It is called the $\emph{adjoint representation}$ of $A$. Besides, this representation induces a Malcev
representation $ad : A_{\bar{0}}\to gl(A_{\bar{1}} )$ of the Malcev algebra $A_{\bar{0}}$ in the
vector space $A_{\bar{1}}$ defined by $ad(x)(y ) = [x, y] ,
\forall x\in A_{\bar{0}},y\in A_{\bar{1}}$ . It is called
the adjoint representation of $A_{\bar{0}}$ in $A_{\bar{1}}$.
\end{ex}
To articulate the adjoint representation and co-adjoint representation of a Malcev superalgebra, we
first present some basic facts about $\mathbb{Z}_{2}$-graded vector spaces.
Let $V=V_{\bar 0}\oplus V_{\bar 1}$ be a $\mathbb{Z}_{2}$-graded vector space over a field $\mathbb{K}$. Then $V^*=
Hom(V, \mathbb{K})$ is the dual vector space of $V$, whose $\mathbb{Z}_2$-gradation  is  given by
\begin{equation}\label{eq:dualgradation}
V^*_\alpha=\{a^*\in V^*\mid a^*(V_{\alpha+\bar1})=\{0\}\}, \;\;\forall \alpha\in \mathbb{Z}_2.
\end{equation}
Let $\langle\cdot ,\cdot \rangle:V^* \times V\rightarrow \mathbb K$ be the canonical pairing, which allows us to identify $V$ with $V^*$ by
\begin{equation}
\langle a^*,b\rangle=(-1)^{|a^*||b|}\langle b,a^*\rangle,\;\;\forall  a^*\in V^*,b\in V.
\end{equation}
We  extend $\langle\cdot,\cdot\rangle$ to $(V \otimes V)^* \times (V \otimes V)$~($(V \otimes V)^* =V^*\otimes V^*$ when $V$ is finite-dimensional) by setting
\begin{equation*}
\langle a_1^*\otimes a_2^*, b_1\otimes b_2\rangle=(-1)^{|a_2^*||b_1|}\langle a_1^*,  b_1\rangle\langle a_2^*,  b_2\rangle,\;\;\forall a_1^*, a_2^*\in V^*, b_1, b_2 \in V.
\end{equation*}

\begin{prop}
Let $(V,\rho)$ be a representation of a Malcev superalgebra $A$  and let $V^{*}$ denote the dual space of $V$.
Define a linear map $\rho^{*} : A \to gl(V^{*})$
by
\begin{equation}\label{dualrepresentation}
   \langle \r^*(x)a^*,b\rangle=-(-1)^{|x||a|}\langle a^*,\r(x)b\rangle,\;\forall x\in A, a^*\in
V^*, b\in V.
\end{equation}
Then $(V^{*},\rho^{*})$ is also a Malcev representation of $A$ in $V^{*}$, which is called $\emph{the dual representation}$
of $(V,\r )$.
\end{prop}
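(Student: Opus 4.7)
The goal is to verify the Malcev representation identity of Definition~\ref{df:superrepresentation}(i) for $\rho^{*}$. Before that, I would note that the grading \eqref{eq:dualgradation} of $V^{*}$ together with the defining formula \eqref{dualrepresentation} immediately gives $\rho^{*}(x)V^{*}_{\alpha}\subseteq V^{*}_{\alpha+|x|}$, so $\rho^{*}$ is an even linear map into $gl(V^{*})$.

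The strategy is to test the desired identity against an arbitrary pair $a^{*}\in\mathcal{H}(V^{*})$, $b\in\mathcal{H}(V)$. Applying $\langle\,\cdot\,,b\rangle$ to both sides after evaluating at $a^{*}$ and iterating the defining relation \eqref{dualrepresentation} converts every composition $\rho^{*}(u_{1})\cdots\rho^{*}(u_{k})a^{*}$ into $(-1)^{k}(-1)^{\epsilon}\langle a^{*},\rho(u_{k})\cdots\rho(u_{1})b\rangle$, where $\epsilon$ is the Koszul sign produced by commuting the parities of the $u_{i}$ past $|a^{*}|$. Applied termwise to the five summands of the Malcev identity for $\rho^{*}$, this reduces the problem to showing that
\[
\langle a^{*},\,\Phi(x,y,z)\,b\rangle = 0,
\]
where $\Phi(x,y,z)$ is an explicit signed combination of the operators $\rho([[x,y],z])$, $\rho(z)\rho(y)\rho(x)$, $\rho(y)\rho(x)\rho(z)$, $\rho(y)\rho([z,x])$ and $\rho(x)\rho([y,z])$.

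To conclude, I would match $\Phi(x,y,z)=0$ against the Malcev representation identity for $\rho$ evaluated at a suitable permutation of $(x,y,z)$, invoking the super-skew-symmetry (i) of the Malcev bracket to rewrite $[[\,\cdot\,,\,\cdot\,],\,\cdot\,]$ into the form required by the permuted identity. Since the canonical pairing is nondegenerate and $a^{*},b$ are arbitrary, the scalar identity then forces the operator identity for $\rho^{*}$. The main obstacle is purely bookkeeping: carefully tracking the Koszul signs arising from pulling each $\rho^{*}$ past $a^{*}$ and from the bracket-reorderings via super-skew-symmetry. No structural ingredient beyond the Malcev identity already assumed for $\rho$ is needed.
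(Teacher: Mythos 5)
Your proposal follows essentially the same route as the paper: pair each composition of $\rho^{*}$'s against $b$, iterate the defining relation \eqref{dualrepresentation} to reverse the order of the operators and accumulate the Koszul signs, and then recognize the resulting operator identity for $\rho$ as the Malcev representation identity at a permuted triple (it is exactly the identity at $(y,x,z)$ after using super-skew-symmetry to rewrite $[[y,x],z]$ and the inner brackets). The sign bookkeeping you flag as the only obstacle does close up, so the argument is correct and matches the paper's proof.
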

\begin{proof}
For any homogenous  $x,y,z\in A$, $a^{*}\in V^{*}$, $ b\in V$, we have
\begin{eqnarray*}
\langle \r^*([[x,y],z])a^*,b\rangle &=&-(-1)^{(|x|+|y|+|z|)|a|}\langle a^*,\r([[x,y],z])b\rangle,\\
\langle \rho^{*}(x)\rho^{*}(y)\rho^{*}(z)a^*,b\rangle&=&-(-1)^{(|x|+|y|+|z|)|a|+|x||y|+|x||z|+|y||z|}\langle a^*,\rho(z)\rho(y)\rho(x)b\rangle,\\
\langle\rho^{*}(z)\rho^{*}(x)\rho^{*}(y)a^*,b\rangle&=&-(-1)^{(|x|+|y|+|z|)|a|+|x||y|+|x||z|+|y||z|}\langle a^*,\rho(y)\rho(x)\rho(z)b\rangle,\\
 \langle\rho^{*}(y)\rho^{*}([z, x])a^*,b\rangle&=&(-1)^{(|x|+|y|+|z|)|a|+|x||y|+|y||z|}\langle a^*,\rho([z, x])\rho(y)b\rangle,\\
\langle\rho^{*}([y, z])\rho^{*}(x)a^*,b\rangle &=&(-1)^{(|x|+|y|+|z|)|a|+|x||y|+|x||z|}\langle a^*, \rho(x)\rho([y, z])b\rangle.
\end{eqnarray*}
Then we have for the map $\rho^{*}$,
\begin{align*}
\langle \r^*([[x,y],z])a^*,b\rangle &=\langle \big(\rho^{*}(x)\rho^{*}(y)\rho^{*}(z) - (-1)^{|z|(|x|+|y|)} \rho^{*}(z)\rho^{*}(x)\rho^{*}(y)\\
 &+ (-1)^{|x|(|y|+|z|)}\rho^{*}(y)\rho^{*}([z, x]) -(-1)^{|x|(|y|+|z|)} \rho^{*}([y, z])\rho^{*}(x)\big)a^*,b\rangle,
\end{align*}
what is equivalent to say that $\rho^{*}$ is a Malcev representation of $A$.
\end{proof}
\begin{ex}
Let $A = A_{\bar{0}}\oplus A_{\bar{1}}$ be a Malcev superalgebra and $A^{*}$ its
dual vector space. We easily show that the map $ad^{*} : A\to gl(A^{*})$
defined by
$$ \langle ad^*(x)\a,y\rangle=-(-1)^{|x||\a|}\langle \a,ad(x)y\rangle, \forall x,y\in \mathcal{H}(A), \a\in
A^*,$$
is a Malcev representation of $A$ in $A^{*}$. It is called $\emph{the co-adjoint representation}$ of $A$.
 Moreover, it is also clear that $ad^{*} : A_{\bar{0}}\to gl(A_{\bar{1}}^{*} )$ of the Malcev algebra $A_{\bar{0}}$ in the
vector space $A_{\bar{1}}^{*}$ defined by $\langle ad^*(x)\a,y\rangle=-\langle \a,ad(x)y\rangle ,
\forall x\in A_{\bar{0}},y\in A_{\bar{1}},\a\in A_{\bar{1}}^{*}$ . It is called
the co-adjoint representation of $A_{\bar{0}}$ in $A_{\bar{1}}^{*}$.
\end{ex}

\section{Super $\mathcal{O}$-operators of Malcev superalgebras and pre-Malcev superalgebras}%
In this section, we introduce the notion of super $\mathcal{O}$-operator and pre-Malcev superalgebras. Then we
study the relations among Malcev superalgebras, pre-Malcev superalgebras and
pre-alternative  superalgebras.

\begin{df}
A super $\mathcal{O}$-operator of alternative superalgebra $(A,\star)$ with respect to the bimodule $(V,\mathfrak{l},\mathfrak{r})$ is a linear map $T:V\to A$ such that,
for all $a, b \in \mathcal{H}(V)$,
\begin{equation}
\label{O-opalternative} T (a)\star T (b) = T \big(\mathfrak{l}(T (a))b + \mathfrak{r}(T (b))a\big).
 \end{equation}
 \end{df}
 A super Rota-Baxter operator of weight $0$ on an alternative superalgebra $(A,\star)$ is
just a super $\mathcal{O}$-operator associated to the bimodule $(A,L, R)$, where $L$ and $R$ are the left and right
multiplication operators corresponding to the multiplication $\star$.
\begin{df} \label{df:operator malcev}
Let $A$ be a Malcev superalgebra and $(V,\rho)$ a representation of $A$.  A linear map $T: V \rightarrow A$ with $|T|= 0$ is called a \textit{super $\mathcal {O}$-operator} of $A$ associated to $(V,\rho)$ if $T$ satisfies
\begin{equation}\label{eq:operator}
[T(a), T(b)] = T\big(\rho(T(a))b-(-1)^{|a||b|}\rho(T(b))a\big), \forall a, b\in \mathcal{H}(V).
\end{equation}

In particular, if $\mathcal {R}$ is a super $\mathcal {O}$-operator of $A$ associated to the adjoint representation $(A,ad)$, then $\mathcal{R}$ is called a \textit{super Rota-Baxter operator} (of weight 0) on $A$, that is, $\mathcal {R}$ satisfies
$$[\mathcal {R}(x), \mathcal {R}(y)] = \mathcal {R}\big([\mathcal {R}(x),y]+(-1)^{|x||y|}[x,\mathcal {R}(y)]\big),$$
for all $x, y \in \mathcal{H}(A)$.
\end{df}
\begin{prop}
Let $T$ be a super $\mathcal{O}$-operator of an alternative superalgebra $(A,\star)$ associated to a bimodule $(V,\mathfrak{l},\mathfrak{r})$.
Then,  $T$ is a super $\mathcal {O}$-operator of $(A,[\c,\c])$ with respect to $(V,\mathfrak{l}-(-1)^{|x||y|}\mathfrak{r})$.
\end{prop}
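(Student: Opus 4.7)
The plan is to derive the super $\mathcal{O}$-operator identity for the Malcev bracket by a direct substitution, using only the defining relation \eqref{O-opalternative} for $T$ on $(A,\star)$ and the definition $[x,y]=x\star y-(-1)^{|x||y|}y\star x$ from Theorem~\ref{thm:alttomalc}. Since $|T|=0$, we have $|T(a)|=|a|$ and $|T(b)|=|b|$, so every super sign that appears can be expressed purely in terms of $|a|$ and $|b|$; this is what will make the computation match cleanly.

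First I would take homogeneous $a,b\in\mathcal{H}(V)$ and expand
\[
[T(a),T(b)] \;=\; T(a)\star T(b) \;-\; (-1)^{|a||b|}\,T(b)\star T(a).
\]
Then I would apply the defining identity \eqref{O-opalternative} to each of the two products, obtaining
\[
[T(a),T(b)] \;=\; T\bigl(\mathfrak{l}(T(a))b+\mathfrak{r}(T(b))a\bigr) \;-\; (-1)^{|a||b|}T\bigl(\mathfrak{l}(T(b))a+\mathfrak{r}(T(a))b\bigr).
\]
Pulling everything under a single $T$ and regrouping by which element of $V$ is acted upon gives
\[
[T(a),T(b)] \;=\; T\Bigl(\bigl(\mathfrak{l}(T(a))-(-1)^{|a||b|}\mathfrak{r}(T(a))\bigr)b \;-\;(-1)^{|a||b|}\bigl(\mathfrak{l}(T(b))-(-1)^{|a||b|}\mathfrak{r}(T(b))\bigr)a\Bigr),
\]
where in the second group I used $(-1)^{2|a||b|}=1$ to restore the sign $-(-1)^{|b||a|}$ in front of $\mathfrak{r}(T(b))a$.

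At this stage I would recognise the two inner brackets as the representation $\rho$ from the preceding proposition: since $|T(a)|=|a|$ and $|T(b)|=|b|$, one has $\rho(T(a))b=\mathfrak{l}(T(a))b-(-1)^{|a||b|}\mathfrak{r}(T(a))b$ and $\rho(T(b))a=\mathfrak{l}(T(b))a-(-1)^{|a||b|}\mathfrak{r}(T(b))a$. Substituting these into the displayed expression yields exactly
\[
[T(a),T(b)] \;=\; T\bigl(\rho(T(a))b-(-1)^{|a||b|}\rho(T(b))a\bigr),
\]
which is the super $\mathcal{O}$-operator condition \eqref{eq:operator} for $(A,[\c,\c])$ with respect to $(V,\rho)$.

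There is no real obstacle here; the statement is essentially a compatibility check between the commutator construction of Theorem~\ref{thm:alttomalc} and the associated representation $\rho=\mathfrak{l}-(-1)^{|\c||\c|}\mathfrak{r}$. The only point requiring care is bookkeeping of the sign $(-1)^{|a||b|}$, which one must track consistently through the two applications of \eqref{O-opalternative} and then identify with the sign appearing in the definition of $\rho$; the fact that $T$ is even is what allows this identification.
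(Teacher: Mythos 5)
Your proof is correct and follows exactly the same route as the paper: expand the commutator, apply the defining identity \eqref{O-opalternative} to each of the two products, and regroup the four terms under a single $T$ to recognise $\rho=\mathfrak{l}-(-1)^{|a||b|}\mathfrak{r}$, using $(-1)^{2|a||b|}=1$ and the evenness of $T$ for the sign bookkeeping. Nothing to add.
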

\begin{proof}
$T$ is a super $\mathcal{O}$-operator of $(A,[\c,\c])$ with respect to $(V,\mathfrak{l}-(-1)^{|x||y|}\mathfrak{r})$  since
\begin{align*}
[T(a),T(b)]=&T(a)\star T(b)-(-1)^{|a||b|}T(b)\star T(a)\\
=&T \big(\mathfrak{l}(T (a))b + \mathfrak{r}(T (b))a\big)-(-1)^{|a||b|}T \big(\mathfrak{l}(T (b))a + \mathfrak{r}(T (a))b\big)\\
=&T \big((\mathfrak{l}-(-1)^{|a||b|}\mathfrak{r})(T (a))b - (-1)^{|a||b|}(\mathfrak{l}-(-1)^{|a||b|}\mathfrak{r})(T (b))a\big).\qedhere
 \end{align*}
\end{proof}
\begin{df}
A pre-Malcev superalgebra $A$ is a super vector space
$A = A _{\bar{0}} \oplus A _{\bar{1}}$
equipped with a bilinear product $(x, y) \to x \c y$ satisfying
$$A_{\a}\c A_{\b}\subseteq A_{\a+ \b},\quad \a,\b\in \mathbb{Z}_{2},$$
and the following equations $(\forall x, y, z, t \in \mathcal{H}(A))$:
\begin{equation}
\begin{split}\label{PM}
 &(-1)^{|x|(|y|+|z|)}[y, z] \c (x \c t)+ [[x, y], z] \c t+ (-1)^{|x||y|} y \c ([x, z] \c t)\\
 &\qquad - x \c (y \c(z \c t)) +(-1)^{|z|(|x|+|y|)} z \c (x \c(y \c t))=0,
\end{split}
\end{equation}
  where $[x,y]=x\c y-(-1)^{|x||y|}y\c x.$
The identity \eqref{PM} is equivalent to
$PM(x, y, z, t) = 0$, where
\begin{equation}\label{PMexpanded}
\begin{split}
PM(x, y, z, t) &= (-1)^{|x|(|y|+|z|)} (y \cdot z) \cdot (x \cdot t) -(-1)^{|x|(|y|+|z|)+|y||z|} (z \cdot y) \cdot (x \cdot t)\\
&+ ((x \cdot y) \cdot z) \cdot t - (-1)^{|x||y|}((y \cdot x) \cdot z) \cdot t\\
 &-(-1)^{(|x|+|y|)|z|} (z \cdot (x \cdot y)) \cdot t + (-1)^{|x||y|+(|x|+|y|)|z|} (z \cdot (y \cdot x)) \cdot t\\
& +(-1)^{|x||y|} y \cdot ((x \cdot z) \cdot t) -(-1)^{|x|(|y|+|z|)} y\cdot ((z \cdot x) \cdot t)\\
 &- x \cdot (y  \cdot (z \cdot t)) + (-1)^{|z|(|x|+|y|)}z \cdot (x \cdot (y \cdot t)),
\end{split}
\end{equation}
for all $x,y,z,t\in \mathcal{H}(A).$
\label{df:pre malcev superalg}
 \end{df}
\begin{thm}
Let $(A,\c)$ be a pre-Malcev superalgebra.
\begin{enumerate}
\item[(i)] \label{item1:thm:supercommutator} Let us define, for any homogeneous elements $x, y \in A$ the operation
\begin{equation}\label{eq:supercommutator}
[x, y] = x \c y- (-1)^{|x||y|}y\c x.
\end{equation}
Then $(A,[\c,\c])$ is a Malcev superalgebra, which is called the sub-adjacent Malcev
superalgebra of $(A,\c)$ and denoted by $A^{C}$ . We call $(A,\c)$ a compatible
pre-Malcev superalgebra structure on the Malcev superalgebra.
\item[(ii)] \label{item2:thm:supercommutator} Let $L_{x}$ (for any $x\in \mathcal{H}(A)$) denote the left multiplication
operator, i.e., $L_{x}(y)=x\c y, \forall y\in \mathcal{H}(A)$. Then $ L \colon A \rightarrow A$ with $x \rightarrow L_{x}$
gives a representation of the Malcev superalgebra $A$, that is,
\[
                    L_{[[x,y],z]} = L_{x}L_{y}L_{z} - (-1)^{|z|(|x|+|y|)}L_{z}L_{x}L_{y} + (-1)^{|x|(|y|+|z|)}L_{y}L_{[z,x]} - (-1)^{|x|(|y|+|z|)}L_{[y,z]}L_{x},
                \]
 for all $x,y,z \in \mathcal{H}(A)$.
\end{enumerate}
\label{thm:supercommutator}
\end{thm}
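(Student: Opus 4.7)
The plan is to prove the two parts in reverse order. Part (ii) turns out to be essentially a direct rewriting of the defining identity \eqref{PM}, and once it is in hand part (i) is purely a matter of expanding and regrouping.

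\textbf{Part (ii).} I first apply the candidate operator identity to an arbitrary $t \in \mathcal{H}(A)$, so that every $L$-factor becomes a left multiplication by $\cdot$. Under this evaluation the left-hand side is $[[x,y],z] \cdot t$, and rearranging \eqref{PM} gives
\begin{equation*}
[[x,y],z] \cdot t = x \cdot (y \cdot (z \cdot t)) - (-1)^{|z|(|x|+|y|)} z \cdot (x \cdot (y \cdot t)) - (-1)^{|x||y|} y \cdot ([x,z] \cdot t) - (-1)^{|x|(|y|+|z|)} [y,z] \cdot (x \cdot t).
\end{equation*}
To match the form stated in (ii), I use $L_{[x,z]} = -(-1)^{|x||z|} L_{[z,x]}$, which follows from the skew-supersymmetry of the bracket defined in \eqref{eq:supercommutator}; substituting converts the coefficient $-(-1)^{|x||y|}$ of the third term into $+(-1)^{|x|(|y|+|z|)}$, yielding exactly the identity of the statement.

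\textbf{Part (i).} The grading condition $[A_\alpha, A_\beta] \subset A_{\alpha+\beta}$ and the skew-supersymmetry of $[\cdot,\cdot]$ are immediate from \eqref{eq:supercommutator}. Only the super-Malcev identity of Definition \ref{df:malcevsuperalg}(ii) remains. My plan is direct expansion: substitute $[a,b] = a\cdot b - (-1)^{|a||b|} b \cdot a$ throughout
\begin{equation*}
(-1)^{|y||z|}[[x,z],[y,t]] - [[[x,y],z],t] - (-1)^{|x|(|y|+|z|+|t|)}[[[y,z],t],x] - (-1)^{(|x|+|y|)(|z|+|t|)}[[[z,t],x],y] - (-1)^{|t|(|x|+|y|+|z|)}[[[t,x],y],z],
\end{equation*}
and collect terms. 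Each triple bracket $[[[\cdot,\cdot],\cdot],\cdot]$ produces $2^{3}=8$ monomials of degree four in $\cdot$, and $[[x,z],[y,t]]$ also yields $8$ such monomials, for a total of $40$ monomials.

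Since $PM(u,v,w,s)$ from \eqref{PMexpanded} contains exactly $10$ such monomials, I expect these $40$ monomials to partition into $4$ groups matching (up to overall signs) the four expressions $PM(x,y,z,t)$, $PM(y,z,t,x)$, $PM(z,t,x,y)$, $PM(t,x,y,z)$. The "outer" monomials of shapes $((\cdot\cdot)\cdot)\cdot(\cdot)$ and $(\cdot)\cdot((\cdot\cdot)\cdot(\cdot))$ and $(\cdot)\cdot((\cdot)\cdot(\cdot\cdot))$ appearing in a given $PM$ come from the corresponding cyclic triple-bracket term, while the "inner" monomials of shape $(\cdot\cdot)\cdot(\cdot\cdot)$ come from the single mixed term $(-1)^{|y||z|}[[x,z],[y,t]]$, in which each factor contributes two out of the four arguments. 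Once the matching is made, each group vanishes because \eqref{PM} asserts $PM=0$ on a pre-Malcev superalgebra.

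\textbf{Main obstacle.} The substantive work lies entirely in sign bookkeeping. The intrinsic signs inside each $PM(u,v,w,s)$ must line up with the four cyclic prefactors $1$, $(-1)^{|x|(|y|+|z|+|t|)}$, $(-1)^{(|x|+|y|)(|z|+|t|)}$, $(-1)^{|t|(|x|+|y|+|z|)}$, together with the factor $(-1)^{|y||z|}$ in front of $[[x,z],[y,t]]$. Grouping the $40$ monomials by parenthesization pattern and by the cyclic shift of $(x,y,z,t)$ producing them should make each matching mechanical, but verifying the signs case by case is where the proof is delicate.
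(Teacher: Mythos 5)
Your part (ii) is correct and is exactly the paper's argument: evaluate the operator identity on $t$, recognize the result as a rearrangement of \eqref{PM}, and convert $L_{[x,z]}$ to $-(-1)^{|x||z|}L_{[z,x]}$. Your part (i) follows the same overall strategy as the paper (expand all brackets via $[a,b]=a\cdot b-(-1)^{|a||b|}b\cdot a$ and group the $40$ monomials into four copies of $PM$), but the specific grouping you propose is wrong, and not merely up to overall signs: the four expressions $PM(x,y,z,t)$, $PM(y,z,t,x)$, $PM(z,t,x,y)$, $PM(t,x,y,z)$ do not partition the expansion. To see this concretely, look at the fully right-nested monomials $a\cdot(b\cdot(c\cdot d))$. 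These arise only from the terms $\pm\, d'\cdot(c'\cdot(a'\cdot b'))$ inside the four triple brackets; for instance $-[[[y,z],t],x]$ contributes (up to signs) $x\cdot(t\cdot(y\cdot z))$ and $x\cdot(t\cdot(z\cdot y))$, and these are the only right-nested monomials beginning with $x$. But $PM(x,y,z,t)$ contains the monomial $-x\cdot(y\cdot(z\cdot t))$, which therefore occurs nowhere in the expansion of the super-Malcev combination, so $PM(x,y,z,t)$ cannot be one of the groups under any choice of overall sign. Since you defer the verification to ``delicate sign bookkeeping,'' this mismatch of monomial content is a genuine gap: carrying out your plan as stated would fail at the matching step.

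The fix is to use the grouping the paper uses, namely
\begin{equation*}
PM(x,t,y,z)+PM(y,x,z,t)+PM(z,y,t,x)+PM(t,z,x,y),
\end{equation*}
i.e.\ the four tuples are the images of $(x,t,y,z)$ (not of $(x,y,z,t)$) under the cyclic substitution $x\mapsto y\mapsto z\mapsto t\mapsto x$. One checks, for example, that the right-nested monomials of $PM(t,z,x,y)$ are $-t\cdot(z\cdot(x\cdot y))$ and $+(-1)^{|x|(|t|+|z|)}x\cdot(t\cdot(z\cdot y))$, matching the contributions of $-[[[x,y],z],t]$ and $-[[[y,z],t],x]$, and that the eight monomials of shape $(\cdot\,\cdot)\cdot(\cdot\,\cdot)$ coming from the four groups reproduce exactly the expansion of $(-1)^{|y||z|}[[x,z],[y,t]]$. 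With this corrected assignment the rest of your outline goes through and coincides with the paper's proof.
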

\begin{proof}
For part (i) we have, for any $x,y,z,t\in \mathcal{H}(A)$,
\begin{align*}
&(-1)^{|y||z|}[[x, z], [y, t]] - [[[x, y], z], t] -(-1)^{|x|(|y|+|z|+|t|)} [[[y, z], t], x]\\
&\qquad- (-1)^{(|x|+|y|)(|z|+|t|)} [[[z, t], x], y]- (-1)^{|t|(|x|+|y|+|z| )} [[[t, x], y], z]\\
&=(-1)^{|y||z|}[x,z]\c(y\c t)-(-1)^{|y|(|z|+|t|)}[x,z]\c(t\c y)-(-1)^{|y||x|+|t|(|x|+|z|)} [y, t]\c(x\c z)\\
&\qquad +(-1)^{|x|(|y|+|z|+|t|)+|t||z|} [y, t]\c(z\c x)-[[x, y], z]\c t+ (-1)^{|t|(|x|+|y|+|z|)} t\c([x, y]\c z)\\
&\qquad -(-1)^{|t|(|x|+|y|+|z|)+(|x|+|y|)|z|} t\c(z\c(x\c y))+ (-1)^{|t|(|x|+|y|+|z|)+ |x||y|+(|x|+|y|)|z|} t\c(z\c(y\c x))\\
&\qquad -(-1)^{|x|(|y|+|z|+|t|)}[[y, z], t]\c x+ x\c([y, z]\c t)- (-1)^{|t|(|y|+|z|)} x\c(t\c(y\c z))\\
&\qquad + (-1)^{|t|(|y|+|z|) +|y||z|} x\c(t\c(z\c y))- (-1)^{(|x|+|y|)(|z|+|t|)} [[z, t], x]\c y\\
&\qquad + (-1)^{|x|(|y|+|z|+|t|)} y\c([z, t]\c x) - (-1)^{|x||y|} y\c(x\c(z\c t))\\
&\qquad + (-1)^{|x|(|y|+|z|+|t|)+|z||t|} y\c(x\c(t\c z))\\
&\qquad - (-1)^{|t|(|x|+|y|+|z| )} [[t, x], y]\c  z+(-1)^{(|x|+|y|)(|z|+|t|)} z\c([t, x]\c y)\\
&\qquad - (-1)^{|x|(|y|+|z|+|t|)+ |y||z|}z\c(y\c(t\c x))+ (-1)^{|x||y|+(|x|+|y|)|z|}z\c(y\c(x\c t))\\
&= HPM(x, t, y, z)  + HPM(y, x, z, t) + HPM(z, y, t, x)  + HPM(t, z, x, y) = 0.
 \end{align*}
For (ii) we compute
    \begin{align*}
 & \Big( L_{[[x,y],z]} - L_{x}L_{y}L_{z} + (-1)^{|z|(|x|+|y|)}L_{z}L_{x}L_{y} - (-1)^{|x|(|y|+|z|)}L_{y}L_{[z,x]}\\
&\qquad + (-1)^{|x|(|y|+|z|)}L_{[y,z]}L_{x} \Big) (t)\\
 &=[[x,y],z] \cdot t - x \cdot(y \cdot (z \cdot t)) + (-1)^{|z|(|x|+|y|)}z \cdot (x \cdot (y \cdot t))\\
 &\qquad- (-1)^{|x|(|y|+|z|)} y \cdot ([z,x] \cdot t) + (-1)^{|x|(|y|+|z|)}[y,z] \cdot (x \cdot t)\\
 &=  PM(x,y,z,t) = 0.\qedhere
    \end{align*}
\end{proof}
\begin{prop}
    A superalgebra $(A,\c)$  is a pre-Malcev superalgebra if and only if
    $(A,[\c,\c])$ with the commutator $[x,y] = x \cdot y - (-1)^{|x||y|} y \cdot x$ is a Malcev superalgebra
    and $L$ is a representation of $(A,[\c,\c])$.
\end{prop}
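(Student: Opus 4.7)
The forward implication is already covered by Theorem~\ref{thm:supercommutator}: part (i) shows that if $(A,\cdot)$ is a pre-Malcev superalgebra, then its commutator $(A,[\cdot,\cdot])$ is a Malcev superalgebra, and part (ii) shows that the left multiplication map $L$ is a representation of this sub-adjacent Malcev superalgebra. So the plan is to prove only the converse: assuming that $(A,[\cdot,\cdot])$ is a Malcev superalgebra and that $L:A\to gl(A)$, $x\mapsto L_x$, is a Malcev representation, I will deduce the pre-Malcev identity \eqref{PM}.

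First I would write down the representation identity for $L$ with arguments $x,y,z\in\mathcal{H}(A)$ and apply it to an arbitrary homogeneous $t\in\mathcal{H}(A)$:
\begin{equation*}
L_{[[x,y],z]}(t)=L_x L_y L_z(t)-(-1)^{|z|(|x|+|y|)}L_z L_x L_y(t)+(-1)^{|x|(|y|+|z|)}L_y L_{[z,x]}(t)-(-1)^{|x|(|y|+|z|)}L_{[y,z]}L_x(t).
\end{equation*}
Unpacking the definition $L_u(v)=u\cdot v$ and the bracket $[u,v]=u\cdot v-(-1)^{|u||v|}v\cdot u$ in every term turns this into a relation among monomials of weight four in $\cdot$; this is exactly the computation that appears, read in the forward direction, at the end of the proof of Theorem~\ref{thm:supercommutator}(ii).

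The next step is to match the resulting identity to \eqref{PM}. The only visible discrepancy is that the expression above contains $y\cdot([z,x]\cdot t)$, whereas \eqref{PM} contains $y\cdot([x,z]\cdot t)$. Using the skew-supersymmetry of the commutator,
\begin{equation*}
[z,x]=-(-1)^{|x||z|}[x,z],
\end{equation*}
we have
\begin{equation*}
(-1)^{|x|(|y|+|z|)}\,y\cdot([z,x]\cdot t)=-(-1)^{|x|(|y|+|z|)+|x||z|}\,y\cdot([x,z]\cdot t)=-(-1)^{|x||y|}\,y\cdot([x,z]\cdot t),
\end{equation*}
which is exactly the term appearing with the correct sign in \eqref{PM}. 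After moving everything to one side, the representation identity for $L$ applied to $t$ becomes precisely $PM(x,y,z,t)=0$, i.e.\ the defining identity \eqref{PM} of a pre-Malcev superalgebra. Hence $(A,\cdot)$ is a pre-Malcev superalgebra.

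There is no real obstacle beyond careful bookkeeping of signs; the only place where a mistake is likely is in the single sign normalization $(-1)^{|x|(|y|+|z|)+|x||z|}=(-1)^{|x||y|}$, so I would display that conversion explicitly. The hypothesis that $(A,[\cdot,\cdot])$ itself be a Malcev superalgebra is in fact automatic from $L$ being a representation (via part (i) of Theorem~\ref{thm:supercommutator} once \eqref{PM} is established), but it is convenient to state it as part of the equivalence, since it is how one usually encounters compatible pre-Malcev structures in practice.
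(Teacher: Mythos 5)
Your proposal is correct and follows essentially the same route as the paper, which simply cites Theorem~\ref{thm:supercommutator}: the forward direction is parts (i)--(ii) of that theorem, and the converse is obtained by reading the identity $\bigl(L_{[[x,y],z]} - L_{x}L_{y}L_{z} + \cdots\bigr)(t) = PM(x,y,z,t)$ from the proof of part (ii) in the reverse direction, exactly as you do. Your explicit sign conversion $-(-1)^{|x|(|y|+|z|)}\,y\cdot([z,x]\cdot t) = (-1)^{|x||y|}\,y\cdot([x,z]\cdot t)$ checks out, so the only difference is that you supply details the paper leaves implicit.
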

\begin{proof}
It follows from Theorem~\ref{thm:supercommutator} above and the definitions of Malcev superalgebra and representation of a Malcev superalgebra.
\end{proof}

Furthermore, we can construct pre-Malcev superalgebras from super $\mathcal{O}$-operators
of Malcev superalgebras.
\begin{thm}\label{tmm:malcevsuperalgtopremalcevsuperalg}
  Let $(A, [\c,\c])$ be a Malcev superalgebra and $(V, \rho)$ be its representation.
Let  $T: V\to A$  be a super $\mathcal{O}$-operator associated to
$(V, \rho)$. Then there exists a
pre-Malcev superalgebra structure on $V$ given by
\begin{eqnarray}\label{malcevsuperalg==>Premalcevsuperalg}
a\cdot b=\rho(T(a))b,\qquad \forall a,b\in A.
\end{eqnarray}
\end{thm}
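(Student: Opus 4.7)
The strategy is to transport the pre-Malcev identity on $V$ to a statement about $\rho$ acting on $A$, where it becomes the Malcev representation identity. The first observation to record is that the $\mathcal{O}$-operator relation \eqref{eq:operator} can be rephrased as
\[
T\bigl(a\cdot b -(-1)^{|a||b|} b\cdot a\bigr) = [T(a),T(b)],\qquad \forall\, a,b\in \mathcal{H}(V),
\]
where $\cdot$ is the product defined by \eqref{malcevsuperalg==>Premalcevsuperalg}. In other words, if we introduce the supercommutator $[a,b]_V := a\cdot b-(-1)^{|a||b|}b\cdot a$ on $V$, then $T$ is a ``morphism'' in the sense that $T([a,b]_V)=[T(a),T(b)]$. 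Iterating, $T([[a,b]_V,c]_V)=[[T(a),T(b)],T(c)]$, and similarly for the other nested brackets appearing in the pre-Malcev identity.

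Next, using the above and the definition $u\cdot v=\rho(T(u))v$, I would expand the left-hand side of \eqref{PM} applied to homogeneous $a,b,c,d\in\mathcal{H}(V)$. Setting $X=T(a)$, $Y=T(b)$, $Z=T(c)$ (note $|X|=|a|$, etc., since $|T|=0$), each of the five terms rewrites as a composition of $\rho$'s acting on $d$:
\begin{align*}
[b,c]_V\cdot(a\cdot d) &= \rho([Y,Z])\rho(X)\,d, &
[[a,b]_V,c]_V\cdot d &= \rho([[X,Y],Z])\,d,\\
b\cdot([a,c]_V\cdot d) &= \rho(Y)\rho([X,Z])\,d, &
a\cdot(b\cdot(c\cdot d)) &= \rho(X)\rho(Y)\rho(Z)\,d,\\
c\cdot(a\cdot(b\cdot d)) &= \rho(Z)\rho(X)\rho(Y)\,d.
\end{align*}

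Collecting these with the sign prefactors from \eqref{PM}, the pre-Malcev identity on $V$ reduces to the equality
\[
\rho([[X,Y],Z])\,d = \rho(X)\rho(Y)\rho(Z)\,d - (-1)^{|Z|(|X|+|Y|)}\rho(Z)\rho(X)\rho(Y)\,d - (-1)^{|X||Y|}\rho(Y)\rho([X,Z])\,d - (-1)^{|X|(|Y|+|Z|)}\rho([Y,Z])\rho(X)\,d,
\]
which is precisely the defining identity of the Malcev representation $(V,\rho)$ after using the skew-supersymmetry $\rho([Z,X])=-(-1)^{|X||Z|}\rho([X,Z])$ to rewrite the third summand in the form given by Definition \ref{df:superrepresentation}. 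Since this identity holds for all $d$, we conclude $PM(a,b,c,d)=0$, proving that $(V,\cdot)$ is a pre-Malcev superalgebra.

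The only real obstacle is bookkeeping of the $\mathbb{Z}_2$-signs: one must verify that the prefactors $(-1)^{|a|(|b|+|c|)}$, $(-1)^{|a||b|}$, $(-1)^{|c|(|a|+|b|)}$ coming out of \eqref{PM} match, after the sign flip on the $\rho([Z,X])$ term, the prefactors in the Malcev representation identity. No additional algebraic input beyond the $\mathcal{O}$-operator relation and the representation axiom for $\rho$ is needed.
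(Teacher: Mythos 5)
Your proposal is correct and follows essentially the same route as the paper's proof: both substitute $X=T(a)$, $Y=T(b)$, $Z=T(c)$, use the super $\mathcal{O}$-operator relation to convert each of the five terms of the pre-Malcev identity into compositions of $\rho$'s acting on $d$, and reduce the whole expression to the defining identity of a Malcev representation (up to the sign flip $\rho([Z,X])=-(-1)^{|X||Z|}\rho([X,Z])$). Your reformulation of the $\mathcal{O}$-operator condition as $T([a,b]_V)=[T(a),T(b)]$ merely packages the paper's term-by-term computation more cleanly; no new idea is involved and no gap is present.
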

\begin{proof}
Let $a,b,c,d \in  V$. Set
$x = T(a), y = T(b); z = T(c)$
and
$[a,b] = a \cdot b - (-1)^{|a||b|} b \cdot a.$ Hence, we have
\begin{align*}
&(-1)^{|a|(|b|+|c|)}[b, c] \cdot (a \cdot d)+ [[a, b], c] \cdot d+ (-1)^{|a||b|} b \cdot ([a, c] \cdot d)\\
&\quad - a \cdot (b \cdot(c \cdot d)) +(-1)^{|c|(|a|+|b|)} c \cdot (a \cdot(b \cdot d))\\
&=(-1)^{|a|(|b|+|c|)}\big(\rho(T(b))c-(-1)^{|b||c|}\rho(T(c))b\big)\cdot( \rho(T(a))d)+[\rho(T(a))b- (-1)^{|a||b|}\rho(T(b))a,c]\cdot d\\
&\quad+ (-1)^{|a||b|} b \cdot ( (\rho(T(a))c- (-1)^{|a||c|}\rho(T(c))a)\cdot d)-a \cdot (b \cdot \rho(T(c))d)\\
&\quad +(-1)^{|c|(|a|+|b|)} c \cdot (a \cdot \rho(T(b))d)\\
&=(-1)^{|a|(|b|+|c|)}\rho\big(T(\rho(T(b))c-(-1)^{|b||c|}\rho(T(c)))b\big)\rho(T(a))d+\Big(\rho(T(\rho(T(a))b\\
&\quad- (-1)^{|a||b|}\rho(T(b))a))c-(-1)^{|c|(|a|+|b|)}\rho(T(c))(\rho(T(a))b\\
&\quad- (-1)^{|a||b|}\rho(T(b))a)\Big)\cdot d+ (-1)^{|a||b|} b \cdot \big(\rho(T( \rho(T(a))c- (-1)^{|a||c|}\rho(T(c))a))d\big)\\
&\quad-a \cdot (\rho(T(b))\rho(T(c))d)+(-1)^{|c|(|a|+|b|)} c \cdot (\rho(T(a))\rho(T(b))d)\\
&=(-1)^{|a|(|b|+|c|)}\rho([T(b), T(c)])\rho(T(a))d+\rho\Big(T\Big(\rho([T(a),T(b)])c-(-1)^{|c|(|a|+|b|)}\rho(T(c))(\rho(T(a))b\\
&\quad- (-1)^{|a||b|}\rho(T(b))a)\Big)\Big)d+ (-1)^{|a||b|} \rho(T(b))\rho([T(a),T(c)])d-\rho(T(a))\rho(T(b))\rho(T(c))d\\
&\quad+(-1)^{|c|(|a|+|b|)} \rho(T(c))\rho(T(a))\rho(T(b))d\\
&=(-1)^{|a|(|b|+|c|)}\rho([T(b), T(c)])\rho(T(a))d+\rho([[T(a),T(b)],T(c)])d+ (-1)^{|a||b|} \rho(T(b))\rho([T(a),T(c)])d\\
&\quad-\rho(T(a))\rho(T(b))\rho(T(c))d+(-1)^{|c|(|a|+|b|)} \rho(T(c))\rho(T(a))\rho(T(b))d\\
&=(-1)^{|x|(|y|+|z|)}\rho([y, z])\rho(x)d+\rho([[x,y],z])d+ (-1)^{|x||y|} \rho(y)\rho([x,z])d\\
&\quad-\rho(x)\rho(y)\rho(z)d+(-1)^{|z|(|x|+|y|)} \rho(z)\rho(x)\rho(y)d=0.
\end{align*}
Thus, $(V,\cdot)$ is a pre-Malcev superalgebra.
\end{proof}
Therefore, there exists a Malcev superaglebra structure on $V$ given by \eqref{eq:supercommutator}
and $T$ is a homomorphism of Malcev superalgebras. Furthermore, there is an
induced pre-Malcev superalgebra structure on $T(V )$ given by
\begin{equation}\label{eq:homomorphism}
T(a) \c T(b) = T(a \cdot b), \forall a,b \in V.
\end{equation}
Moreover, the corresponding associated Malcev superalgebra structure on $T(V )$
given by \eqref{eq:supercommutator} is just a Malcev supersubalgebra structure of $A$ and $T$ becomes a
homomorphism of pre-Malcev superalgebra.
\begin{prop}\label{pro:nsc}
   Let $(A, [\c,\c])$ be a Malcev superalgebra. Then there exists  a compatible pre-Malcev superalgebra structure on $A$ if and only if there exists an invertible super $\mathcal{O}$-operator on $A$ associated to  a representation $(V, \rho)$.
\end{prop}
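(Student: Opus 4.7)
\smallskip

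\noindent\textbf{Proof plan.} This is an ``if and only if'' and both directions are essentially already packaged inside Theorem \ref{thm:supercommutator} and Theorem \ref{tmm:malcevsuperalgtopremalcevsuperalg}; the work is just to identify the right super $\mathcal{O}$-operator in each direction and verify a short computation.

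\smallskip

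\noindent\textit{The ($\Leftarrow$) direction.} Assume $T\colon V\to A$ is an invertible super $\mathcal{O}$-operator associated to a representation $(V,\rho)$. By Theorem \ref{tmm:malcevsuperalgtopremalcevsuperalg}, the formula $a\cdot b=\rho(T(a))b$ endows $V$ with a pre-Malcev superalgebra structure. Since $T$ is a bijective even linear map, I would transport this structure to $A$ by setting
\begin{equation*}
x\c y \;=\; T\bigl(T^{-1}(x)\cdot T^{-1}(y)\bigr) \;=\; T\bigl(\rho(x)T^{-1}(y)\bigr),\quad \forall x,y\in \mathcal{H}(A).
\end{equation*}
Being an isomorphic image of a pre-Malcev superalgebra, $(A,\c)$ is a pre-Malcev superalgebra. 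To check compatibility, write $a=T^{-1}(x)$, $b=T^{-1}(y)$ and use \eqref{eq:operator}:
\begin{equation*}
x\c y - (-1)^{|x||y|} y\c x \;=\; T\bigl(\rho(T(a))b - (-1)^{|a||b|}\rho(T(b))a\bigr) \;=\; [T(a),T(b)] \;=\; [x,y].
\end{equation*}
Hence $(A,\c)$ is compatible with the given Malcev superalgebra structure.

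\smallskip

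\noindent\textit{The ($\Rightarrow$) direction.} Assume $(A,\c)$ is a compatible pre-Malcev superalgebra structure on the Malcev superalgebra $(A,[\c,\c])$. By Theorem \ref{thm:supercommutator}(ii), the left multiplication $L\colon A\to gl(A)$, $x\mapsto L_x$, is a representation of the Malcev superalgebra $A$ on the underlying vector space $A$ itself. I claim that the identity map $\mathrm{id}_A\colon A\to A$ is an invertible super $\mathcal{O}$-operator of $A$ associated to $(A,L)$. Indeed, for any $x,y\in \mathcal{H}(A)$, compatibility gives
\begin{equation*}
[\mathrm{id}_A(x),\mathrm{id}_A(y)] = [x,y] = x\c y-(-1)^{|x||y|}y\c x = L_x(y)-(-1)^{|x||y|}L_y(x),
\end{equation*}
which is exactly $\mathrm{id}_A\bigl(L(\mathrm{id}_A(x))y-(-1)^{|x||y|}L(\mathrm{id}_A(y))x\bigr)$, verifying \eqref{eq:operator}. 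Invertibility is obvious.

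\smallskip

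\noindent\textit{Expected obstacle.} There is essentially no obstacle: both directions reduce to invoking previously established results (Theorems \ref{thm:supercommutator} and \ref{tmm:malcevsuperalgtopremalcevsuperalg}) plus a single one-line computation. The only thing to be careful about is parity bookkeeping in the transport-of-structure step: since $T$ is even, $|T^{-1}(x)|=|x|$, so the sign $(-1)^{|a||b|}$ coincides with $(-1)^{|x||y|}$ and nothing gets twisted when transferring the pre-Malcev product from $V$ to $A$. This minor check is the only place where one has to think.
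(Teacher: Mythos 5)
Your proof is correct and follows essentially the same route as the paper: in one direction the identity map serves as the invertible super $\mathcal{O}$-operator, and in the other you transport the pre-Malcev structure of Theorem \ref{tmm:malcevsuperalgtopremalcevsuperalg} along $T$ to obtain $x\cdot y=T\bigl(\rho(x)T^{-1}(y)\bigr)$, exactly as in the paper. One small point in your favour: the paper asserts that $\mathrm{id}_A$ is a super $\mathcal{O}$-operator associated to $(A,ad)$, which would force $[x,y]=2[x,y]$; your choice of the left-multiplication representation $(A,L)$ from Theorem \ref{thm:supercommutator}(ii) is the correct one, and your explicit compatibility check in the other direction is a welcome addition the paper omits.
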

\begin{proof}
Let $(A,\cdot)$ be a pre-Malcev superalgebra and $(A,[\c,\c])$ be the associated Malcev superalgebra.  Then the identity map $id: A \to A$ is an invertible super $\mathcal{O}$-operator  on  $(A,[\c,\c])$  associated to $(A,ad)$.

Conversely, suppose that there exists an invertible super $\mathcal{O}$-operator $T$ of $(A,[\c,\c])$  associated to  a representation $(V, \r)$ . Then, using Theorem \ref{tmm:malcevsuperalgtopremalcevsuperalg},  there is a pre-Malcev superalgebra structure on $T(V)=A$ given by
\begin{equation*}
   T(a)\cdot T(b)=T(\rho(T(a))b),\ \ \forall\ a, b\in \mathcal{H}(V).
\end{equation*}
If we set $x=T(a)$ and $y=T(b)$, then we get
\begin{equation*}
  x\cdot y=T(\rho(x)T^{-1}(y)),\qquad \forall x,y\in \mathcal{H}(A).
\end{equation*}
\end{proof}
An obvious consequence of Theorem \ref{tmm:malcevsuperalgtopremalcevsuperalg} is the following construction of a pre-Malcev  superalgebra in terms of super Rota-Baxter operator on a Malcev superalgebra.
\begin{cor}\label{preMalcevByRotaBaxter}
  Let $(A, [\c,\c])$ be a Malcev superalgebra and the linear map $\mathcal{R}: A\rightarrow A$  is  a  super Rota-Baxter operator. Then there exists a pre-Malcev superalgebra structure on $A$ given by
\begin{eqnarray*}
 x\cdot y=[\mathcal{R}(x),y],\ \forall \ x,y\in \mathcal{H}(A).
\end{eqnarray*}
If in addition, $\mathcal{R}$ is invertible, then there is a compatible pre-Malcev superalgebra structure on $A$ given by
\begin{equation*}
   x\cdot y=\mathcal{R}([x,\mathcal{R}^{-1}(y)]),\ \forall x,y \in \mathcal{H}(A).
\end{equation*}
\end{cor}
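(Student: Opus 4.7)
The strategy is to recognize the corollary as a direct specialization of Theorem~\ref{tmm:malcevsuperalgtopremalcevsuperalg} and (for the second half) Proposition~\ref{pro:nsc}, where the representation in question is the adjoint representation of $(A,[\c,\c])$.

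First I would invoke Definition~\ref{df:operator malcev}: by definition, a super Rota-Baxter operator $\mathcal{R}$ (of weight $0$) on the Malcev superalgebra $(A,[\c,\c])$ is precisely a super $\mathcal{O}$-operator associated to the adjoint representation $(A,ad)$, where $ad(x)(y) = [x,y]$. Thus, setting $V = A$, $\rho = ad$, and $T = \mathcal{R}$ in Theorem~\ref{tmm:malcevsuperalgtopremalcevsuperalg} immediately yields a pre-Malcev superalgebra structure on $A$ defined by
\begin{equation*}
x \cdot y = \rho(T(x))y = ad(\mathcal{R}(x))y = [\mathcal{R}(x), y], \qquad \forall \, x,y \in \mathcal{H}(A).
\end{equation*}
This establishes the first assertion with no further computation, since all four-variable axioms have already been verified in the proof of Theorem~\ref{tmm:malcevsuperalgtopremalcevsuperalg}.

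For the second assertion, I would apply Proposition~\ref{pro:nsc}. When $\mathcal{R}$ is invertible, it is an invertible super $\mathcal{O}$-operator of $(A,[\c,\c])$ associated to $(A, ad)$, so the proposition (together with the construction in its proof) provides a compatible pre-Malcev superalgebra structure on $A$ given by
\begin{equation*}
x \cdot y = \mathcal{R}\bigl(ad(\mathcal{R}^{-1}\cdot\text{input})\cdots\bigr).
\end{equation*}
More concretely, following the identification $x = T(a)$, $y = T(b)$ in the proof of Proposition~\ref{pro:nsc}, we obtain
\begin{equation*}
x \cdot y \;=\; T\bigl(\rho(x)\, T^{-1}(y)\bigr) \;=\; \mathcal{R}\bigl(ad(x)\, \mathcal{R}^{-1}(y)\bigr) \;=\; \mathcal{R}\bigl([x,\mathcal{R}^{-1}(y)]\bigr),
\end{equation*}
which is exactly the stated formula. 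The compatibility (i.e., that the sub-adjacent Malcev bracket recovers $[\c,\c]$) follows from Proposition~\ref{pro:nsc}, since the identity super $\mathcal{O}$-operator on the pre-Malcev side corresponds to the original invertible $\mathcal{R}$.

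There is essentially no obstacle, as this corollary is just a ``plug-in'' application; the only points requiring a moment of care are (i) confirming that a super Rota-Baxter operator is a super $\mathcal{O}$-operator for $ad$ — which is immediate from the defining identity in Definition~\ref{df:operator malcev} — and (ii) tracking the sign conventions when rewriting $\rho(x)\,T^{-1}(y)$ as $[x,\mathcal{R}^{-1}(y)]$, which is a direct translation since $|\mathcal{R}|=0$ so no extra signs are introduced.
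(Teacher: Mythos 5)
Your proposal is correct and follows exactly the route the paper intends: the paper states this corollary as ``an obvious consequence of Theorem~\ref{tmm:malcevsuperalgtopremalcevsuperalg}'' without further proof, and your specialization to $V=A$, $\rho=ad$, $T=\mathcal{R}$ for the first formula, together with the appeal to Proposition~\ref{pro:nsc} (whose proof already produces $x\cdot y=T(\rho(x)T^{-1}(y))$) for the invertible case, is precisely that argument spelled out.
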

Let us recall some basic notions about bilinear forms of a Malcev superalgebra  $(A, [\c,\c])$.
  A bilinear form $\omega:A\otimes A\to \mathbb{K}$  is said to be
  \begin{enumerate}
   \item supersymmetric if  $\omega(x, y) = (-1)^{|x||y|}\omega(y, x), \forall x,y\in \mathcal{H}(A),$
   \item skew supersymmetric if  $\omega(x, y) = -(-1)^{|x||y|}\omega(y, x), \forall x,y\in \mathcal{H}(A),$
   \item nondegenerate if $x\in A$ satisfies $\omega(x,y)=0, \forall y\in \mathcal{H}(A)$, then $x = 0,$
   \item invariant if  \begin{equation}\omega([x, y], z) = \omega(x, [y, z]),\forall x, y, z \in \mathcal{H}(A).\end{equation}
    \end{enumerate}
\begin{df}
 Let $(A,[\c,\c])$ be  a Malcev superalgebra. A  skew supersymmetric, nondegenerate
 bilinear form $\omega:A\otimes A\to \mathbb{K}$   is called a super two-cocycle
on $A$ if $\omega$ satisfies
\begin{equation}\label{eq:2-cocycle}
(-1)^{|x||z|}\omega(x, [ y, z]) + (-1)^{|y||x|}\omega(y, [ z, x])+(-1)^{|z||y|}\omega(z, [ x, y]),
\end{equation}
$\forall x,y,z\in \mathcal{H}(A)$. Then $\omega$ is called symplectic. A Malcev superalgebra
$(A,[\c,\c])$ with a symplectic form is called a symplectic Malcev superalgebra and denoted $(A,[\c,\c],\omega)$.
\end{df}
Let $(A,[\c,\c],\omega)$ be a symplectic Malcev superalgebra. Define the multiplication
$"\c"$ on $A$ by
\begin{equation}\label{eq:symplectictopremalc}
\omega(x\cdot y,z)=(-1)^{|x|(|y|+|z|)}\omega(y,[z,x] ).
\end{equation}
\begin{thm}
Under the above notations,
 there exists a compatible pre-Malcev superalgebra structure $"\cdot"$ on $A$ given by Eq. \eqref{eq:symplectictopremalc}.
\end{thm}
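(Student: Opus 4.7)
The plan is to realize the multiplication ``$\cdot$'' from Eq.~\eqref{eq:symplectictopremalc} as the pre-Malcev superalgebra structure induced by an invertible super $\mathcal{O}$-operator, and then appeal to Theorem~\ref{tmm:malcevsuperalgtopremalcevsuperalg} and Proposition~\ref{pro:nsc} to conclude. The key idea is that a symplectic form is exactly the datum needed to identify $A$ with its dual in a way compatible with the adjoint/co-adjoint representations.

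First, I would use the nondegeneracy and evenness of $\omega$ to define an even linear isomorphism $\omega^{\flat}:A\to A^{*}$ by $\langle\omega^{\flat}(x),y\rangle=\omega(x,y)$, and set $T=(\omega^{\flat})^{-1}:A^{*}\to A$. The main step is then to verify that $T$ is an invertible super $\mathcal{O}$-operator of $(A,[\c,\c])$ associated to the co-adjoint representation $(A^{*},ad^{*})$; that is, writing $x=T(\a)$, $y=T(\b)$, one must check
\[
[T(\a),T(\b)]=T\bigl(ad^{*}(T(\a))\b-(-1)^{|\a||\b|}ad^{*}(T(\b))\a\bigr).
\]
Pairing both sides with an arbitrary homogeneous $z\in A$ via $\omega$ and using the definitions of $ad^{*}$ in \eqref{dualrepresentation}, of $T$, and of the induced grading on $A^{*}$ from \eqref{eq:dualgradation}, this pairing identity reduces to the super 2-cocycle identity \eqref{eq:2-cocycle} for $\omega$. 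This sign-bookkeeping is the main technical obstacle: one must rewrite $\omega(T(\a),T(\b))$ and $\omega(T(\xi),z)$ cleanly in terms of $\langle\a,\cdot\rangle$ and use skew-supersymmetry and invariance of $\omega$ to match every cyclic term, so the $\mathcal{O}$-operator equation holds term-by-term precisely when \eqref{eq:2-cocycle} vanishes.

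Once $T$ is an invertible super $\mathcal{O}$-operator, Theorem~\ref{tmm:malcevsuperalgtopremalcevsuperalg} gives a pre-Malcev superalgebra structure on $A^{*}$ by $\a\cdot\b=ad^{*}(T(\a))\b$, which transports along $T$ via Eq.~\eqref{eq:homomorphism} to a pre-Malcev structure on $T(A^{*})=A$,
\[
x\cdot y=T\bigl(ad^{*}(x)T^{-1}(y)\bigr),\qquad \forall x,y\in \mathcal{H}(A).
\]
Unfolding $T^{-1}=\omega^{\flat}$ and the definition of $ad^{*}$, a short computation shows that this is equivalent to $\omega(x\cdot y,z)=(-1)^{|x|(|y|+|z|)}\omega(y,[z,x])$, which is exactly Eq.~\eqref{eq:symplectictopremalc}. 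Finally, since $T$ is invertible, Proposition~\ref{pro:nsc} ensures the resulting pre-Malcev superalgebra is compatible with the original Malcev bracket on $A$, finishing the proof.
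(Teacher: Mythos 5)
Your proposal is correct and follows essentially the same route as the paper: define the musical isomorphism induced by $\omega$, show its inverse is an invertible super $\mathcal{O}$-operator for the co-adjoint representation $(A^{*},ad^{*})$ via the $2$-cocycle identity, invoke Proposition~\ref{pro:nsc} to get the compatible pre-Malcev structure, and unfold the definitions to recover Eq.~\eqref{eq:symplectictopremalc}. If anything, you are slightly more careful than the paper about the direction of the map ($T\colon A^{*}\to A$ versus the paper's $T\colon A\to A^{*}$), but the argument is the same.
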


 \begin{proof}
 Define the linear map $T : A\to A^{*}$ by $\langle T(x), y\rangle = \omega(x, y)$.    Since $(A,[\c,\c] )$ is a Malcev superalgebra, $(A^*,ad^\ast)$ is a representation of $A$. By the fact that $\omega$ is skew supersymmetric and using  \eqref{eq:2-cocycle}, we obtain that $T$ is an invertible  super $\mathcal{O}$-operator  associated to the representation  $(A^{*},ad^\ast)$. By Proposition~\ref{pro:nsc}, there exists a compatible pre-Malcev superalgebra structure given by
\begin{equation*}
x\cdot y = T^{-1}(ad^{\ast}(x)T(y)).
\end{equation*}
Hence,
\begin{align*}
\omega(x\cdot y,z)=&\langle T(x\cdot y),z\rangle=\langle ad^{\ast}(x)T(y),z\rangle=-(-1)^{|x||y|}\langle T(y),[x, z]\rangle\\
&=-(-1)^{|x||y|}\omega(y, [x, z])=(-1)^{|x|(|y|+|z|)}\omega(y,[z, x]).
\qedhere
\end{align*}
\end{proof}

Analogous to the connection between alternative superalgebras and Malcev
superalgebras, pre-alternative superalgebras are closely related to pre-Malcev superalgebras.
\begin{df}
A pre-alternative superalgebra $A$ is a super vector space $A = A _{\bar{0}} \oplus A _{\bar{1}}$
equipped with two bilinear maps $\prec,\succ: A \otimes A \rightarrow A$,   satisfying
for all $x, y, z \in \mathcal{H}(A)$ and  $x \star y = x \prec y + x\succ y$,
\begin{eqnarray}
&&(x\star y) \succ z - x\succ(y \succ z) + (-1)^{|x||y|}(y \star x) \succ z - (-1)^{|x||y|} y \succ (x \succ z) = 0,\\
&&(x\prec y)\prec z - x\prec(y \star z) + (-1)^{|y||z|}(x \prec z)\prec y -(-1)^{|y||z|} x\prec(z\star y) = 0,\\
&&(x\succ y) \prec z - x\succ(y \prec z) + (-1)^{|x||y|}(y \prec x) \prec z - (-1)^{|x||y|}y \prec (x \star z) = 0,\\
&&(x\succ y)\prec z - x\succ(y \prec z) + (-1)^{|y||z|}(x \star z)\succ y -(-1)^{|y||z|} x\succ(z\succ y) = 0.
\end{eqnarray}
\end{df}
\begin{prop}
Let $(A,\prec,\succ)$ be a pre-alternative superalgebra. Then the product  $x \star y = x \prec y + x\succ y$ defines an alternative superalgebra $A$.
\end{prop}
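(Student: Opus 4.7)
The plan is to verify directly that the product $x\star y = x\prec y + x\succ y$ satisfies both the left and right alternative super identities. I will expand the associator
\[
as(x,y,z) = (x\star y)\star z - x\star (y\star z)
\]
using the decomposition $\star = \prec + \succ$ into its eight nested pieces
\[
(x\tfrac{}{}\!\prec\! y)\prec z,\;(x\succ y)\prec z,\;(x\prec y)\succ z,\;(x\succ y)\succ z,\;x\prec(y\prec z),\;\ldots
\]
and then invoke the four pre-alternative axioms (PA1)--(PA4) in a balanced way to collapse everything to zero.

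For the \emph{left alternative super identity}, I would form the combination
\[
as(x,y,z) + (-1)^{|x||y|}as(y,x,z).
\]
Axiom (PA1) is tailor-made for this symmetrization: it says exactly that the four $\succ z$-terms obtained from the sum $(x\star y)\succ z + (-1)^{|x||y|}(y\star x)\succ z$ equal $x\succ(y\succ z) + (-1)^{|x||y|}y\succ(x\succ z)$, which cancels the two pure $\succ\succ$ pieces coming from $-x\star(y\star z) - (-1)^{|x||y|}y\star(x\star z)$. After this cancellation what remains is the $\prec$-tail
\[
(x\star y)\prec z + (-1)^{|x||y|}(y\star x)\prec z - x\prec(y\star z) - (-1)^{|x||y|}y\prec(x\star z) - x\succ(y\prec z) - (-1)^{|x||y|}y\succ(x\prec z).
\]
To kill this tail I apply axiom (PA3) twice, once as written and once with $(x,y)$ swapped (and multiplied by $(-1)^{|x||y|}$), to rewrite $(x\succ y)\prec z + (-1)^{|x||y|}(y\prec x)\prec z$ and its mirror in terms of $x\succ(y\prec z)$, $y\prec(x\star z)$, etc. Summing the two instances of (PA3) produces precisely the missing terms needed to cancel the remaining six, so the left alternative super identity holds.

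For the \emph{right alternative super identity}, the argument is symmetric: I form
\[
as(x,y,z) + (-1)^{|y||z|}as(x,z,y),
\]
use (PA2) to collapse the pure $\prec$ contributions $(x\prec y)\prec z + (-1)^{|y||z|}(x\prec z)\prec y$ against $x\prec(y\star z) + (-1)^{|y||z|}x\prec(z\star y)$, and then apply (PA4) twice (once as written, once with $(y,z)$ swapped and multiplied by $(-1)^{|y||z|}$) to cancel the residual mixed $\succ\prec$ and $\succ\succ$ terms.

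The only real obstacle is the bookkeeping of the Koszul signs: one has to check that the parity factors produced by swapping arguments in $as$ match exactly the parity factors appearing in (PA1)--(PA4), so that each pair of axiom applications telescopes cleanly. Once the sign conventions are reconciled, the cancellation is purely mechanical and no further structural input is needed. This yields both alternative super identities, and hence $(A,\star)$ is an alternative superalgebra.
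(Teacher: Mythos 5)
Your proof is correct. The paper states this proposition without proof, so there is nothing to compare against; your argument is the standard direct verification, and the pairings you describe do work out: after expanding the associator via $\star=\prec+\succ$, the left super-alternative identity follows from (PA1) together with (PA3) plus its $(x,y)$-transpose multiplied by $(-1)^{|x||y|}$, and the right one from (PA2) together with (PA4) plus its $(y,z)$-transpose multiplied by $(-1)^{|y||z|}$, with all Koszul signs matching.
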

\begin{thm}\label{pre-altToPre-Malcev}
    Let $T:V\to A$ be a super  $\mathcal{O}$-operator of alternative superalgebra $(A,\star)$ with respect to the bimodule $(V,\mathfrak{l},\mathfrak{r})$. Then $(V,\prec,\succ)$ is a pre-alternative superalgebra, where for all $a,b\in \mathcal{H}(V)$,
\begin{equation}
  \label{alt==>prealt} a\succ b= \mathfrak{l}(T (a))b \ \ \text{and}\ \ a\prec b= \mathfrak{r}(T (b))a.
\end{equation}
 Moreover, if $(V,\cdot)$ is the   pre-Malcev  superalgebra associated to the  Malcev admissible superalgebra $(A,[\c,\c])$ on the representation $(V,\mathfrak{l}-(-1)^{|x||y|}\mathfrak{r})$, then $a\cdot b=a\succ b-(-1)^{|a||b|}b\prec a$.

\end{thm}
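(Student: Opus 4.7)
The plan is to verify each of the four pre-alternative superalgebra axioms by translating them, via the $\mathcal{O}$-operator property, into the four bimodule axioms \eqref{rephomalt1}--\eqref{rephomalt4} of the alternative superalgebra $(A,\star)$, and then to read off the second statement as a direct application of Theorem~\ref{tmm:malcevsuperalgtopremalcevsuperalg}.

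The key observation underlying the first part is that the $\mathcal{O}$-operator condition can be rewritten as
\begin{equation*}
T(a)\star T(b) \;=\; T\bigl(\mathfrak{l}(T(a))b + \mathfrak{r}(T(b))a\bigr) \;=\; T(a\succ b + a\prec b) \;=\; T(a\star b),
\end{equation*}
so that $T$ intertwines the star-products on $V$ and on $A$. First I would use this identity to rewrite each term of axiom (i) in the form $\mathfrak{l}(\cdot)\mathfrak{l}(\cdot)c$ or $\mathfrak{l}(\cdot\star\cdot)c$; after setting $x=T(a)$, $y=T(b)$ and using that $T$ is even so $|T(a)|=|a|$, axiom (i) becomes exactly \eqref{rephomalt1} applied to $c$. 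The same procedure turns axiom (ii) into \eqref{rephomalt2} applied to $a$, and axioms (iii) and (iv) into \eqref{rephomalt3} and \eqref{rephomalt4} applied to the remaining test vector. Since each bimodule axiom holds by assumption, each pre-alternative axiom follows.

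For the second part I would invoke the previous proposition, which asserts that $T$ is a super $\mathcal{O}$-operator of the Malcev admissible superalgebra $(A,[\cdot,\cdot])$ with respect to the representation $(V,\mathfrak{l}-(-1)^{|\cdot||\cdot|}\mathfrak{r})$, and then apply Theorem~\ref{tmm:malcevsuperalgtopremalcevsuperalg}. This gives
\begin{equation*}
a\cdot b \;=\; \bigl(\mathfrak{l}-(-1)^{|a||b|}\mathfrak{r}\bigr)(T(a))\,b \;=\; \mathfrak{l}(T(a))b - (-1)^{|a||b|}\mathfrak{r}(T(a))b.
\end{equation*}
Recognising the first summand as $a\succ b$ and $\mathfrak{r}(T(a))b$ as $b\prec a$ immediately yields $a\cdot b = a\succ b - (-1)^{|a||b|} b\prec a$, as required.

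The main obstacle is the sign bookkeeping in axioms (iii) and (iv). In contrast to the cleanly symmetric axioms (i) and (ii), these mixed axioms involve Koszul signs arising from a swap whose indices must be aligned with those of \eqref{rephomalt3}--\eqref{rephomalt4}; in practice one may need to multiply the bimodule relation through by a suitable sign to normalise the leading coefficient before reading off the desired identity. Apart from this careful sign accounting, the argument is entirely mechanical.
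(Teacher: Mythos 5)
Your proposal follows essentially the same route as the paper: the paper verifies the mixed axiom by expanding $\prec,\succ$ through $\mathfrak{l},\mathfrak{r}$, using the $\mathcal{O}$-operator identity to replace $T(a\succ c + a\prec c)$ by $T(a)\star T(c)$, and reading off the bimodule relation \eqref{rephomalt3}, declaring the remaining three axioms analogous; your matching of each pre-alternative axiom to the corresponding bimodule axiom \eqref{rephomalt1}--\eqref{rephomalt4} is exactly this argument carried out in full. The second part is likewise identical to the paper's one-line computation via Theorem~\ref{tmm:malcevsuperalgtopremalcevsuperalg}.
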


\begin{proof} For any $a,b,c\in \mathcal{H}(V)$, using \eqref{rephomalt3} and \eqref{O-opalternative} yields
\begin{align*}
&(a\succ b) \prec c -a\succ(b \prec c) + (b \prec a) \prec c-b \prec (a \star c)\\
&\quad =\mathfrak{r}(T (c))\mathfrak{l}(T (a))b-\mathfrak{l}(T(a))\mathfrak{r}(T(c))b \\
&\quad\quad
+(-1)^{|a||b|}\mathfrak{r}(T(c))\mathfrak{r}(T (a))b-(-1)^{|a||b|}\mathfrak{r}(T(a\succ c+ a\prec c))b\\
&\quad = \mathfrak{r}(T (c))\mathfrak{l}(T (a))b-\mathfrak{l}(T(a))\mathfrak{r}(T(c))b \\
&\quad\quad
+(-1)^{|a||b|}\mathfrak{r}(T(c))\mathfrak{r}(T (a))b-(-1)^{|a||b|}\mathfrak{r}(T(\mathfrak{l}(T(a))c+ \mathfrak{r}(T(c))a))b\\
&\quad =\mathfrak{r}(T (c))\mathfrak{l}(T (a))b-\mathfrak{l}(T(a))\mathfrak{r}(T(c))b \\
&\quad\quad
+(-1)^{|a||b|}\mathfrak{r}(T(c))\mathfrak{r}(T (a))b-(-1)^{|a||b|}\mathfrak{r}(T(a)\star T(c))=0.
\end{align*}
The other identities for $(V,\prec,\succ)$ being a pre-alternative superalgebras can be verified
similarly.
Moreover, using Eqs.  \eqref{malcevsuperalg==>Premalcevsuperalg} and  \eqref{alt==>prealt},
\begin{equation*}
a\cdot b=(\mathfrak{l}-(-1)^{|a||b|}\mathfrak{r})(T(a))b=\mathfrak{l}(T(a))b-(-1)^{|a||b|}\mathfrak{r}(T(a))b=a\succ b-(-1)^{|a||b|}b\prec a.\qedhere
 \end{equation*}
 \end{proof}
\begin{cor}\label{homalt==>homprealt}
Let $(A, \star )$ be an alternative superalgebra and $\mathcal{R}:A\rightarrow A$ be a super Rota-Baxter operator
of weight $0$. If multiplications
$\prec $ and $\succ $ on $A$ are defined for all $x, y\in \mathcal{H}(A)$ by
$x\prec y = x\star \mathcal{R}(y)$ and $x\succ y = \mathcal{R}(x)\star y$,
then $(A, \prec , \succ)$ is a pre-alternative superalgebra.

Moreover, if $(A,\cdot)$ is the   pre-Malcev  superalgebra associated to the  Malcev admissible superalgebra $(A,[\c,\c])$, then $x\cdot y=x\succ y-(-1)^{|x||y|}y\prec x$.
\end{cor}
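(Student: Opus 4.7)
The plan is to recognize this statement as a direct specialization of Theorem~\ref{pre-altToPre-Malcev} to the adjoint bimodule. Recall from the paragraph right after the definition of super $\mathcal{O}$-operator on an alternative superalgebra that a super Rota-Baxter operator of weight $0$ on $(A,\star)$ is nothing other than a super $\mathcal{O}$-operator of $(A,\star)$ with respect to the bimodule $(A,L,R)$, where $L$ and $R$ are the left and right multiplication operators of $\star$.

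With this observation in hand, I would apply Theorem~\ref{pre-altToPre-Malcev} with $V=A$, $T=\mathcal{R}$, $\mathfrak{l}=L$, $\mathfrak{r}=R$. The formula \eqref{alt==>prealt} then reads
\begin{align*}
x\succ y &= \mathfrak{l}(T(x))y = L(\mathcal{R}(x))y = \mathcal{R}(x)\star y,\\
x\prec y &= \mathfrak{r}(T(y))x = R(\mathcal{R}(y))x = x\star \mathcal{R}(y),
\end{align*}
which is exactly the pair of operations in the statement. Since $\mathcal{R}$ is a super $\mathcal{O}$-operator of $(A,\star)$ with respect to $(A,L,R)$, Theorem~\ref{pre-altToPre-Malcev} immediately yields that $(A,\prec,\succ)$ is a pre-alternative superalgebra, giving the first assertion with no further calculation.

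For the moreover part, the second conclusion of Theorem~\ref{pre-altToPre-Malcev} states that the pre-Malcev superalgebra $(V,\cdot)$ induced on $V$ by the associated Malcev admissible structure on $(A,[\cdot,\cdot])$ satisfies $a\cdot b = a\succ b - (-1)^{|a||b|} b\prec a$. Substituting $V=A$, $T=\mathcal{R}$, this translates directly into
\begin{equation*}
x\cdot y = x\succ y - (-1)^{|x||y|} y\prec x, \quad \forall x,y\in\mathcal{H}(A),
\end{equation*}
which is the required identity. There is no genuine obstacle here: the only thing to verify is that the specialization to the regular bimodule $(A,L,R)$ reproduces the Rota-Baxter identity, which is essentially by definition. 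Hence the entire corollary reduces to invoking Theorem~\ref{pre-altToPre-Malcev} in the adjoint case.
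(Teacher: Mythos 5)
Your proof is correct and is exactly the argument the paper intends: the corollary is stated without an explicit proof precisely because it is the specialization of Theorem~\ref{pre-altToPre-Malcev} to the regular bimodule $(A,L,R)$, using the paper's own remark that a super Rota--Baxter operator of weight $0$ is a super $\mathcal{O}$-operator associated to $(A,L,R)$. (Only a minor terminological note: $(A,L,R)$ is usually called the \emph{regular} bimodule rather than the ``adjoint'' one, but this does not affect the argument.)
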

Summarizing the above study, we have the following commutative diagram
of categories:

\begin{equation}\label{Diagramme1}
\begin{split}
\xymatrix{
\begin{array}{c}
\substack{\text{Pre-alternative} \\ \text{superalg.}}
\end{array} \ar[rr]_{\quad x\prec y + x\succ y} \ar[dd]^{x\succ y - (-1)^{|x||y|}y\prec x} && \begin{array}{c}
\substack{\text{Alternative} \\ \text{superalg.}}
\end{array}
\ar@<-1ex>[ll]_{\quad super R-B }\ar[dd]^{x\star y - (-1)^{|x||y|}y\star x}\\
&& \\
\begin{array}{c}
\substack{\text{Pre-Malcev} \\ \text{superalg.}}
\end{array}
\ar[rr]_{\quad x\c y - (-1)^{|x||y|}y\c x} &&
\begin{array}{c}
\substack{\text{Malcev} \\ \text{superalg.}}
\end{array}\ar@<-1ex>[ll]_{\quad super R-B }
}
\end{split}
\end{equation}

\section{An analogue of the super CYBE on Malcev superalgebras  }
We derive a close relation between super $\mathcal{O}$-operators and solutions to the super Malcev  Yang-Baxter equations on Malcev superalgebras. We prove that under
the skew-supersymmetric  condition, a solution to the super MYBE in a Malcev superalgebra
is equivalent to a super $\mathcal{O}$-operator associated to the co-adjoint  representation. Further, we give
an explicit constructing way to exploit  solutions to the super MYBE in semidirect
product Malcev superalgebras from a common given super $\mathcal{O}$-operator.
\subsection{Super $\mathcal{O}$-operators and operator forms of the super MYBE}
Let $(A, [\c,\c])$ be a Malcev superalgebra and $r\in A \otimes A$. The standard form of the super
Malcev Yang-Baxter equation  in $A$ is
given in the tensor expression as follows:
\begin{equation}\label{eq:super MYBE}
[r_{12},r_{13}]+[r_{12},r_{23}]+[r_{13},r_{23}]= 0,
\end{equation}
where $r$ is called a solution of the super MYBE
and for $r=\sum\limits_i x_i\otimes y_i\in A^{\otimes 2},$
\begin{equation}
r_{12}=\sum_ix_i\otimes y_i\otimes 1,\quad r_{13}=\sum_{i}x_i\otimes
1\otimes y_i,\quad r_{23}=\sum_i1\otimes x_i\otimes y_i.
\label{eq:r12}
\end{equation}
Note that Eq. \eqref{eq:super MYBE} has the same form as the ordinary case, but the commutation
relation is in the graded spaces, that is,
\begin{align}
[r_{12},r_{13}]&=\sum_{i,j}(-1)^{|x_j||y_i|}[x_i,x_j]\otimes y_i\otimes
y_j,\\
 [r_{13}, r_{23}]&=\sum_{i,j}x_i\otimes x_j\otimes
[y_i, y_j],\\
[r_{12}, r_{23}]&=\sum_{i,j} (-1)^{|x_j||y_i|}x_i\otimes
[y_i,  x_j]\otimes y_j.\end{align}
Let $V$ be a super vector space. Then $r=\sum\limits_i x_i\otimes y_i\in
A^{\otimes 2}$ is said to be {\bf skew-supersymmetric} (resp. {\bf supersymmetric}) if
$r=-\sigma(r)$ (resp. $r=\sigma(r)$).
Furthermore, $r$ can be regarded as a map from $V^{*}$ to $V$ in the following way
\begin{align}
\langle x^*\otimes y^*,r\rangle&=\langle x^*\otimes y^*, \sum\limits_i x_i\otimes y_i\rangle=\sum_{i}(-1)^{|y^*||x_i|}\langle x^*,x_i\rangle\langle  y^*,y_i\rangle\nonumber\\
&=(-1)^{|r||x^*|}\langle y^*,r(x^*)\rangle,\quad
\forall x^*,y^*\in A^*.\label{eq:idenrmap}
\end{align}
Eq. \eqref{eq:super MYBE} gives the tensor form of  super MYBE. What we will do next is
to replace the tensor form by a linear operator satisfying some conditions.
\begin{thm}\label{mybe==dualoop}
Let $(A, [\c,\c])$ be a Malcev superalgebra and $r\in A \otimes A$ be  skew-supersymmetric with $|r|=0$ . Then $r$ is
a  solution of  the super MYBE in $A$ if and only if $r$ satisfies
\begin{equation}
[r(x^{*}), r(y^{*})] = r\big(ad^{*}r(x^{*})(y^{*}) -(-1)^{|x^{*}||y^{*}|} ad^{*}r(y^{*})(x^{*})\big), \forall x^{*}, y^{*}\in A^{*}.
\label{rOp}
\end{equation}
So there is a  pre-Malcev superalgebra structure on $A^{*}$ given by
\begin{equation}
 x\cdot y=ad^{*}r(x^{*})(y^{*}),\ \forall \ x^{*},y^{*}\in A^{*}.
\end{equation}
\end{thm}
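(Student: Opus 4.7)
The plan is to regard $r$ as the even linear map $r:A^*\to A$ via the identification \eqref{eq:idenrmap}, and then translate the tensor identity \eqref{eq:super MYBE} into an operator identity by pairing it with an arbitrary tensor $x^*\otimes y^*\otimes z^*\in (A^*)^{\otimes 3}$ and using nondegeneracy of the pairing. Two facts will do the bulk of the work: first, since $|r|=0$, the pairing gives $\langle y^*,r(x^*)\rangle=\langle x^*\otimes y^*,r\rangle$, so skew-supersymmetry $\sigma(r)=-r$ is equivalent to $\langle y^*,r(x^*)\rangle=-(-1)^{|x^*||y^*|}\langle x^*,r(y^*)\rangle$; and second, the coadjoint action satisfies $\langle ad^*(x)\alpha,z\rangle=-(-1)^{|x||\alpha|}\langle\alpha,[x,z]\rangle$ by the example following \eqref{dualrepresentation}.

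Concretely, I would compute each of the three brackets $[r_{12},r_{13}]$, $[r_{12},r_{23}]$, $[r_{13},r_{23}]$ paired against $x^*\otimes y^*\otimes z^*$. Unfolding the sums and moving the $x^*,y^*$ entries onto $r$ using the pairing, one obtains:
\begin{itemize}
\item $\langle x^*\otimes y^*\otimes z^*,[r_{13},r_{23}]\rangle$ becomes a scalar multiple of $\langle z^*,[r(x^*),r(y^*)]\rangle$;
\item $\langle x^*\otimes y^*\otimes z^*,[r_{12},r_{23}]\rangle$ becomes, after applying skew-supersymmetry to one copy of $r$ and then using the definition of $ad^*$, a scalar multiple of $\langle z^*,r\bigl(ad^*(r(x^*))y^*\bigr)\rangle$;
\item $\langle x^*\otimes y^*\otimes z^*,[r_{12},r_{13}]\rangle$ becomes analogously a scalar multiple of $\langle z^*,r\bigl(ad^*(r(y^*))x^*\bigr)\rangle$ with the extra Koszul sign $-(-1)^{|x^*||y^*|}$.
\end{itemize}
Since $z^*\in A^*$ is arbitrary and the pairing is nondegenerate, the vanishing of the CYBE sum is equivalent to the operator identity \eqref{rOp}. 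This realizes $r$ as a super $\mathcal{O}$-operator of $(A,[\c,\c])$ associated to the coadjoint representation $(A^*,ad^*)$.

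For the pre-Malcev conclusion, I would apply Theorem~\ref{tmm:malcevsuperalgtopremalcevsuperalg} directly with $V=A^*$, $\rho=ad^*$ and $T=r$: this immediately produces the pre-Malcev superalgebra structure $x^*\cdot y^*=ad^*(r(x^*))(y^*)$ on $A^*$.

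The main obstacle is purely bookkeeping: keeping the Koszul signs consistent through three successive maneuvers (the pairing convention in the definition right after \eqref{eq:dualgradation}, the skew-supersymmetry $\sigma(r)=-r$, and the transfer via $ad^*$) so that all three computed quantities have matching sign conventions and can be equated to produce \eqref{rOp} cleanly. Once the signs are verified in one term, the symmetry between $[r_{12},r_{13}]$ and $[r_{12},r_{23}]$ (swapping the roles of $x^*$ and $y^*$) gives the second sign automatically.
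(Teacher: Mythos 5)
Your proposal is correct in substance but follows a genuinely different route from the paper. The paper's proof is a basis computation: it fixes a homogeneous basis $\{e_1,\dots,e_m,f_1,\dots,f_n\}$, expands $r=\sum a_{ij}e_i\otimes e_j+\sum b_{kl}f_k\otimes f_l$ with $a_{ij}=-a_{ji}$, $b_{kl}=b_{lk}$, writes out all three brackets in terms of the structure constants $C^k_{ij}$, and then checks the operator identity \eqref{rOp} separately in the four parity cases $(e^*_j,e^*_q)$, $(e^*_q,f^*_c)$, $(f^*_b,e^*_j)$, $(f^*_l,f^*_t)$, matching coefficients on both sides. Your coordinate-free argument --- pairing the tensor identity against an arbitrary $x^*\otimes y^*\otimes z^*$ and invoking nondegeneracy --- avoids structure constants entirely and makes transparent \emph{why} the three CYBE summands correspond one-to-one with the three terms of \eqref{rOp}; the cost is that every Koszul sign must be threaded through the pairing convention, the swap $\sigma(r)=-r$, and the definition of $ad^*$, which is exactly the bookkeeping the basis computation externalizes into the symmetry conditions on $a_{ij}$ and $b_{kl}$. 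Your identification of targets is right, but note one slip in the sketch: skew-supersymmetry of $r$ is needed in the $[r_{12},r_{13}]$ term (where $y^*$ pairs against the \emph{second} tensor leg $y_i$ of a copy of $r$, so that copy must be flipped before it can be read as $r(y^*)$), whereas $[r_{12},r_{23}]$ and $[r_{13},r_{23}]$ pair $x^*$ and $y^*$ against first legs and need no flip; you attributed the flip to $[r_{12},r_{23}]$ instead. Your handling of the final claim, by applying Theorem~\ref{tmm:malcevsuperalgtopremalcevsuperalg} with $V=A^*$, $\rho=ad^*$, $T=r$, is exactly what the paper leaves implicit in ``it is easy to get the other results.''
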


\begin{proof}
 Let $\{e_1,...,e_m,f_1,...,f_n\}$ be a basis of $A$ where $\{e_1, . . . , e_m\}$ is a basis
of $A_{\bar{0}}$ and $\{f_1, . . . , f_n\}$ is a basis of $A_{\bar{1}}$ and let
$\{e_1^*,...,e_m^*,f_1^*,...,f_n^*\}$ be its dual basis with $e^{*} _{i}\in A^{*} _{\bar{0}}$
and $f^{*} _{j}
\in A^{*} _{\bar{1}}$. Since $r$ is skew-supersymmetric
and $|r| = 0$, we can set
$$r=\sum_{i,j}^{m}{a_{ij}e_i\otimes
e_j}+ \sum_{k,l}^{n}{b_{kl}f_k\otimes
f_l},$$
where $a_{ij}=-a_{ji}$ and $b_{kl} = b_{lk}$.
Now, we have
\begin{align*}
[r_{12},r_{13}]&=\left[\sum_{i,j}^{m}a_{ij}e_i\otimes
e_j\otimes 1+\sum_{k,l}^{n} b_{kl}f_k\otimes
f_l \otimes 1 ,\sum_{p,q}^{m}a_{pq}e_p\otimes 1\otimes
e_q+\sum_{s,t}^{n} b_{st}f_s\otimes 1\otimes
f_t\right]\\
&=\sum_{i,j,k,l,p}^{m} C_{ip}^{a} a_{ij}a_{pq} e_a\otimes e_j\otimes e_q + \sum_{i,j}^{m}\sum_{s,t,b}^{n}C_{is}^{b} a_{ij}b_{st} f_b\otimes e_j\otimes f_t\\
&+ \sum_{p,q}^{m}\sum_{k,l,c}^{n} C_{kp}^{c} b_{kl}a_{pq} f_c\otimes f_l\otimes e_q-\sum_{d}^{m}\sum_{k,l,s,t}^{n}C_{ks}^{d} b_{kl}b_{st} e_d\otimes f_l\otimes f_t,
\end{align*}
\begin{align*}
[r_{12},r_{23}]&=\left[\sum_{i,j}^{m}a_{ij}e_i\otimes
e_j\otimes 1+\sum_{k,l}^{n} b_{kl}f_k\otimes
f_l \otimes 1 ,  \sum_{p,q}^{m} 1\otimes a_{pq}e_p\otimes
e_q+\sum_{s,t}^{n} 1\otimes b_{st}f_s\otimes
f_t\right]\\
&=\sum_{i,j,k,l,p}^{m} C_{jp}^{a} a_{ij}a_{pq} e_i\otimes e_a\otimes e_q + \sum_{i,j}^{m}\sum_{s,t,b}^{n}C_{js}^{b} a_{ij}b_{st} e_i\otimes f_b\otimes f_t\\
&+ \sum_{p,q}^{m}\sum_{k,l,c}^{n} C_{lp}^{c} b_{kl}a_{pq} f_k\otimes f_c\otimes e_q+\sum_{d}^{m}\sum_{k,l,s,t}^{n}C_{ps}^{d} b_{kl}b_{st} f_k\otimes e_d\otimes f_t,
\end{align*}
\begin{align*}
[r_{13},r_{23}]&=\left[\sum_{i,j}^{m}a_{ij}e_i\otimes 1\otimes
e_j+\sum_{k,l}^{n} b_{kl}f_k\otimes 1\otimes
f_l ,\sum_{p,q}^{m} 1\otimes a_{pq}e_p\otimes
e_q+\sum_{s,t}^{n} 1\otimes b_{st}f_s\otimes
f_t\right]\\
&=\sum_{i,j,k,l,p}^{m} C_{jq}^{a} a_{ij}a_{pq} e_i\otimes e_p\otimes e_a + \sum_{i,j}^{m}\sum_{s,t,b}^{n}C_{jt}^{b} a_{ij}b_{st} e_i\otimes f_s\otimes f_b\\
&+ \sum_{p,q}^{m}\sum_{k,l,c}^{n} C_{lq}^{c} b_{kl}a_{pq} f_k\otimes e_p\otimes f_c-\sum_{d}^{m}\sum_{k,l,s,t}^{n}C_{lt}^{d} b_{kl}b_{st} f_k\otimes f_s\otimes e_d,
\end{align*}
where $C ^{k}
_{ij}$'s are the structure coefficients of Malcev superalgebra $A$ on the basis
$\{e_1,...,e_m,f_1,...,f_n\}$.
Then $r$ is a solution of the super
MYBE in $(A,[\c,\c])$ if and only if (for
any $1\leq a,d\leq m$ and $1\leq b,c\leq n$)
\begin{align*}
&\sum_{i,p}\Big( C_{ip}^{a} a_{ij}a_{pq}+C_{pi}^{q} a_{ip}a_{ji} + C_{ip}^{a} a_{ai}a_{pq}\Big)e_a\otimes e_j\otimes e_q\\
&\quad +\Big(\sum_{i}^{m}\sum_{s}^{n}C_{is}^{b} a_{ij}b_{st}+\sum_{l,s}^{n}C_{ls}^{j} b_{bl}b_{st}+ \sum_{i}^{m}\sum_{s}^{n}C_{is}^{t} a_{ij}b_{bs}\Big) f_b\otimes e_j\otimes f_t\\
&\quad+\Big(\sum_{p}^{m}\sum_{k}^{n} C_{kp}^{c} a_{pq}b_{kl} + \sum_{p}^{m}\sum_{k}^{n} C_{kp}^{l} b_{ck}a_{pq}-\sum_{k,t}^{n}C_{kt}^{q} b_{ck}b_{lt} \Big)f_c\otimes f_l\otimes e_q\\
&\quad+\Big(-\sum_{d}^{m}\sum_{k,s}^{n}C_{ks}^{d} b_{kl}b_{st} + \sum_{j}^{m}\sum_{s}^{n}C_{js}^{l} a_{dj}b_{st}+ \sum_{j}^{m}\sum_{s}^{n}C_{js}^{t} a_{dj}b_{ls}\Big) e_d\otimes f_l\otimes f_t =0.
\end{align*}
On the other hand, by \eqref{eq:idenrmap}, we get
$$r(e_j^*)=-\sum\limits_{i=1}^{m} a_{ji}e_i,\quad r(f_k^*)=-\sum\limits_{s=1}^{n} b_{ks}f_s,\quad 1\leq j\leq m,  1\leq k\leq n .$$
We prove the conclusion in the following four cases:

$\textbf{Case (1)}$ $x^{*}=e_{j}^{*}$ and $y^{*}=e_{q}^{*}$.Then by Eq.\eqref{rOp}, we have:
$$\sum_{i,p}\Big( C_{ip}^{a} a_{ij}a_{pq}+C_{pi}^{q} a_{ip}a_{ji} + C_{ip}^{a} a_{ai}a_{pq}\Big)e_a=0.$$

$\textbf{Case (2)}$ $x^{*}=e_{q}^{*}$ and $y^{*}=f_{c}^{*}$.
Then by Eq.\eqref{rOp}, we have:
$$\Big(\sum_{p}^{m}\sum_{k}^{n} C_{kp}^{c} a_{pq}b_{kl} + \sum_{p}^{m}\sum_{k}^{n} C_{kp}^{l} b_{ck}a_{pq}-\sum_{k,t}^{n}C_{kt}^{q} b_{ck}b_{lt} \Big)f_l=0.$$

$\textbf{Case (3)}$ $x^{*}=f_{b}^{*}$ and $y^{*}=e_{j}^{*}$.
Then by Eq.\eqref{rOp}, we have:
$$\Big(\sum_{i}^{m}\sum_{s}^{n}C_{is}^{b} a_{ij}b_{st}+\sum_{l,s}^{n}C_{ls}^{j} b_{bl}b_{st}+ \sum_{i}^{m}\sum_{s}^{n}C_{is}^{t} a_{ij}b_{bs}\Big)f_t=0.$$

$\textbf{Case (4)}$ $x^{*}=f_{l}^{*}$ and $y^{*}=f_{t}^{*}$.
Then by Eq.\eqref{rOp}, we have:
$$\Big(-\sum_{d}^{m}\sum_{k,s}^{n}C_{ks}^{d} b_{kl}b_{st} + \sum_{j}^{m}\sum_{s}^{n}C_{js}^{l} a_{dj}b_{st}+ \sum_{j}^{m}\sum_{s}^{n}C_{js}^{t} a_{dj}b_{ls}\Big) e_d=0.$$
Therefore, it is easy to see that $r$ is a solution of  the super MYBE in $A$ if and only
if $r$ satisfies \eqref{rOp}.

Thus the first half part of the conclusion holds. It is easy to get the other results.
\end{proof}

\begin{cor}
Let $(A, [\c,\c])$ be a Malcev superalgebra and $r\in A \otimes A$. Assume $r$ is skew-supersymmetric whith $|r|=0$
and there exists a nondegenerate symmetric invariant bilinear form $B$ on $A$. Define a linear
map $\varphi : A\to A^{\ast}$  by $\langle \varphi(x),y\rangle = B(x, y)$ for any $x, y\in A$. Then $r$ is a solution of the super MYBE in $A$ if and only if  $\tilde{r} = r\varphi : A\to A$ is a Rota-Baxter operator associated to
the module $(A, ad)$, that is, $ \tilde{r}$ satisfies the following equation
$$[\tilde{r}(x),\tilde{r}(y)] = \tilde{r}([\tilde{r}(x), y] + [x, \tilde{r}(y)]),\forall x,y\in A.$$
Hence there is a pre-Malcev superalgebra structure on $A$ given by
\begin{equation}
 x\cdot y= [\tilde{r}(x), y],\ \forall \ x,y\in A.
\end{equation}
\end{cor}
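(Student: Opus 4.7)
The plan is to reduce the corollary to Theorem \ref{mybe==dualoop} by transporting the co-adjoint condition on $r\colon A^*\to A$ to an adjoint condition on $\tilde r = r\varphi\colon A\to A$ via the isomorphism $\varphi$. The key structural input I need is that $\varphi$ intertwines the adjoint and co-adjoint representations.

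First I would verify the intertwining identity
\begin{equation*}
ad^{*}(x)\varphi(y) \;=\; \varphi([x,y]), \qquad \forall x,y\in \mathcal{H}(A).
\end{equation*}
To see this, using the supersymmetry and invariance of $B$, compute
$\langle \varphi([x,y]), z\rangle = B([x,y], z) = B(x,[y,z])$, while
$\langle ad^{*}(x)\varphi(y), z\rangle = -(-1)^{|x||y|}\langle \varphi(y), [x,z]\rangle = -(-1)^{|x||y|} B(y, [x,z])$. Applying invariance followed by the skew-supersymmetry of the bracket converts the second expression into $B(x,[y,z])$, giving the claim. (This also uses that $\varphi$ is even, which follows because $B$ has degree $0$ together with the grading \eqref{eq:dualgradation} of $A^*$.) In particular, since $B$ is nondegenerate, $\varphi\colon A\to A^*$ is an isomorphism of representations of the Malcev superalgebra $(A,[\c,\c])$, identifying $(A,ad)$ with $(A^*,ad^*)$.

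Next I would apply Theorem \ref{mybe==dualoop}: under the hypotheses ($r$ skew-supersymmetric, $|r|=0$), $r$ is a solution of the super MYBE iff $r\colon A^*\to A$ is a super $\mathcal{O}$-operator associated to $(A^*,ad^*)$, i.e.\
\begin{equation*}
[r(x^{*}),r(y^{*})] \;=\; r\bigl(ad^{*}r(x^{*})(y^{*}) - (-1)^{|x^{*}||y^{*}|}\, ad^{*}r(y^{*})(x^{*})\bigr).
\end{equation*}
Setting $x^{*}=\varphi(x)$, $y^{*}=\varphi(y)$ (which is legal as $\varphi$ is bijective and even) and substituting the intertwining identity turns the right-hand side into
$r\varphi\bigl([\tilde r(x),y] - (-1)^{|x||y|}[\tilde r(y),x]\bigr)$; using super skew-symmetry of $[\c,\c]$, the second term becomes $[x,\tilde r(y)]$. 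The left-hand side becomes $[\tilde r(x),\tilde r(y)]$. Therefore the super $\mathcal{O}$-operator condition on $r$ is equivalent to the super Rota-Baxter identity
\begin{equation*}
[\tilde r(x),\tilde r(y)] \;=\; \tilde r\bigl([\tilde r(x), y] + [x, \tilde r(y)]\bigr),
\end{equation*}
for all $x,y\in\mathcal{H}(A)$. Finally, the induced pre-Malcev structure $x\cdot y=[\tilde r(x),y]$ is then an immediate application of Corollary \ref{preMalcevByRotaBaxter}.

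The main obstacle is solely the bookkeeping of signs in the intertwining identity for $\varphi$ and in the translation step: one must use invariance, supersymmetry of $B$, and super skew-symmetry of $[\c,\c]$ in just the right order. Once that identity is nailed down, the rest is a direct substitution and invocation of the previously established results.
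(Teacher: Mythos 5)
Your proposal is correct and follows essentially the same route as the paper: establish the intertwining identity $\varphi(ad(x)y)=ad^{*}(x)\varphi(y)$ from the supersymmetry and invariance of $B$, then substitute $x^{*}=\varphi(x)$, $y^{*}=\varphi(y)$ into the operator form of the super MYBE from Theorem \ref{mybe==dualoop} and use super skew-symmetry of the bracket to rewrite the second term. Your version is slightly more explicit about the sign bookkeeping and about invoking Corollary \ref{preMalcevByRotaBaxter} for the final pre-Malcev structure, but the argument is the same.
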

\begin{proof}

For any $x, y\in A$, we have
\begin{align*}
\langle \varphi(ad(x)y), z\rangle&=B([x,y],z)=(-1)^{|z|(|x|+|y)}B(z,[x,y])=-(-1)^{|x||y|}B(y,[x,z])\\
&=-(-1)^{|x||y|}\langle \varphi(y),ad(x)z\rangle=\langle ad^{*}(x)\varphi(y),z\rangle.
\end{align*}
Hence $\varphi(ad(x)y)=ad^{*}(x)\varphi(y)$ for any $x, y\in A$.
Let $x^{*}=\varphi(x)$, $y^{*}=\varphi(y)$, then by Theorem \ref{mybe==dualoop}, $r$ is a solution of the super MYBE in $A$ if and only if
$$
[r\varphi(x), r\varphi(y)] = [r(x^*),r(y^*)] = r(ad^{*}r(x^{*})(y^{*}) -(-1)^{|x||y|} ad^{*}r(y^{*})(x^{*})) = r\varphi\big([r\varphi(x),y]+[x,r\varphi(y)]\big).$$
Therefore the conclusion holds.
\end{proof}

 \subsection{Constructions of solutions of the super MYBE in Malcev superalgebras}
 The following result establishes a close relation between super $\mathcal{O}$-operators and the super classical Yang-Baxter equation
in Malcev superalgebras.
\begin{thm}
Let $(A, [\c,\c])$ be a Malcev superalgebra. Let $\rho^*:A\rightarrow gl(V^*)$ be the dual representation
of the representation $\rho: A\rightarrow gl(V)$ of the Malcev
superalgebra $A$. Let $T:V\to A$  be a linear map which can be identified as an
element in $ (A\ltimes_{\rho^*}V^*)\otimes (A\ltimes_{\rho^*}V^*).$  Then $T$ is a super $\mathcal{O}$-operator of $A$ associated to $(V,\rho)$, with $|T|=0$,
if and only if $r = T -\sigma(T)$ is a skew-supersymmetric solution of the  super MYBE
in $A\ltimes_{\rho^*}V^{\ast}.$
\label{mybe==oop}
\end{thm}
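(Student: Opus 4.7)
The plan is to reduce the claim to Theorem~\ref{mybe==dualoop} applied to the semidirect-product Malcev superalgebra $\tilde A := A\ltimes_{\rho^*}V^*$, whose underlying space is $A\oplus V^*$ and whose dual canonically identifies with $A^*\oplus V$.

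First I would observe that $T\in\mathrm{Hom}(V,A)\cong A\otimes V^*$ sits in the off-diagonal summand $A\otimes V^*$ of $\tilde A\otimes\tilde A$, while $\sigma(T)$ lies in the opposite summand $V^*\otimes A$. Consequently $r=T-\sigma(T)$ is manifestly skew-supersymmetric with parity $|r|=|T|=0$, so Theorem~\ref{mybe==dualoop} applies to $r$ inside $\tilde A$. By that theorem, $r$ solves the super MYBE in $\tilde A$ if and only if, viewed as a map $r:\tilde A^*\to\tilde A$, it satisfies
\[
[r(\xi),r(\eta)]_{\tilde A}=r\bigl(ad^*_{\tilde A}(r(\xi))\eta-(-1)^{|\xi||\eta|}\,ad^*_{\tilde A}(r(\eta))\xi\bigr),\qquad \forall\,\xi,\eta\in\tilde A^*.
\]
Under the identification $\tilde A^*=A^*\oplus V$, I would compute $r$ explicitly from \eqref{eq:idenrmap}: for $v\in V$ one finds $r(v)=T(v)\in A$, while for $\alpha\in A^*$ the image $r(\alpha)$ lies in $V^*$ and is, up to a sign, the transpose $T^{t}(\alpha)$. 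A key feature of the semidirect bracket \eqref{sdp.2} is that $[V^*,V^*]_{\tilde A}=0$, which will kill several terms below.

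Then I would verify the operator equation by case-splitting on whether $\xi,\eta$ lie in $V$ or $A^*$. The essential case is $\xi=a,\ \eta=b\in V$: the left-hand side becomes $[T(a),T(b)]\in A$, and using the semidirect bracket \eqref{sdp.2} together with the definition \eqref{dualrepresentation} of $ad^*_{\tilde A}$, the co-adjoint action $ad^*_{\tilde A}(T(a))b$ reduces to $\rho(T(a))b\in V\subset\tilde A^*$; after tracking the signs, this case is exactly the super $\mathcal O$-operator identity \eqref{eq:operator} for $T$ with respect to $(V,\rho)$. The mixed case $\xi\in V,\ \eta\in A^*$ (and its symmetric counterpart) must be shown to impose no new relation: pairing both sides against an arbitrary test element of $\tilde A$ and exploiting $[V^*,V^*]_{\tilde A}=0$, this case reduces to the transpose of the $V\otimes V$ relation and so is automatic. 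Finally the case $\xi,\eta\in A^*$ is trivial because both sides then land in $V^*$, where the bracket vanishes.

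The main obstacle will be the careful bookkeeping of the $\mathbb{Z}_2$-signs produced by \eqref{eq:dualgradation}, \eqref{dualrepresentation} and \eqref{sdp.2} when unpacking $ad^*_{\tilde A}$ on arguments lying in different summands of $A^*\oplus V$. In particular, checking that the mixed cases impose no condition beyond \eqref{eq:operator} for $T$ is the genuinely delicate step and is the super analogue of the corresponding non-graded Malcev calculation.
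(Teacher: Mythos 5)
Your proposal is correct in outline, but it takes a genuinely different route from the paper. The paper proves Theorem \ref{mybe==oop} by brute force: it fixes homogeneous bases of $A$ and $V$, writes $T=\sum_i T(u_i)\otimes u_i^*+\sum_k T(v_k)\otimes v_k^*\in A\otimes V^*$, and expands $[r_{12},r_{13}]+[r_{12},r_{23}]+[r_{13},r_{23}]$ term by term using the semidirect bracket, regrouping everything into tensors whose $A$-components are exactly the defects $[T(u),T(u')]+T(\rho(T(u'))u)-T(\rho(T(u))u')$; the equivalence is then read off directly, without ever invoking Theorem \ref{mybe==dualoop}. You instead apply Theorem \ref{mybe==dualoop} to the Malcev superalgebra $\tilde A=A\ltimes_{\rho^*}V^*$ (legitimate, since $r=T-\sigma(T)$ is skew-supersymmetric of parity zero and Proposition \ref{semidirectmalcsuper} makes $\tilde A$ a Malcev superalgebra), and then analyse the operator identity \eqref{rOp} on $\tilde A^*\cong A^*\oplus V$ by cases. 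Your case analysis is sound: on $V\otimes V$ the identity is precisely \eqref{eq:operator}; the $A^*\otimes A^*$ case is vacuous because $[V^*,V^*]_{\tilde A}=0$ kills both sides; and the mixed case, once paired against test vectors in $V$ using \eqref{dualrepresentation} and the finite-dimensionality of $V$, is equivalent to the $V\otimes V$ case rather than an additional constraint. What your approach buys is conceptual economy — no triple-tensor expansion — at the price of delicate sign bookkeeping in $ad^*_{\tilde A}$ and a dependence on Theorem \ref{mybe==dualoop}; what the paper's computation buys is self-containedness and an explicit display of which tensor components carry the $\mathcal{O}$-operator defect. To make your argument complete you would still need to write out the signs in $r(v)=T(v)$ and $r(\alpha)=\pm T^t(\alpha)$ and in the mixed case, which you correctly flag as the delicate step.
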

\begin{proof}
Let $\{e_1,...,e_m,f_1,...,f_n\}$ be a basis of $A$ as in the proof of Theorem \ref{mybe==dualoop}. Let $\{u_1,...,u_s,v_1,...,v_t\}$ be a basis of $V$
and $\{u_1 ^{*},...,u_s ^{*},v_1 ^{*},...,v_t^{*}\}$ be the dual basis in a similar way. Since $|T|=0$, we set
$$T(u_i)=\sum_{a=1}^m a_{ia}e_a, i=1,\cdots, s,\quad T(v_k)=\sum_{b=1}^n b_{kb}e_b, k=1,\cdots, t.$$
Moreover, since ${\rm Hom}(V,A)\cong A\otimes V^*,$ we have
\begin{align*}
T&=\sum_{i=1}^s T(u_i)\otimes u_i^* + \sum_{k=1}^t T(v_k)\otimes v_k^*
=\sum_{i=1}^s\sum_{a=1}^m a_{ia}e_a\otimes u_i^* +\sum_{k=1}^t\sum_{b=1}^n b_{kb}e_b \otimes v_k^*\\
&\in A\otimes V^*\subset (A\ltimes_{\rho^*}V^*)\otimes (A\ltimes_{\rho^*}V^*).
\end{align*}
Therefore,
$$r = T -\sigma(T) =\sum_{i=1}^s T(u_i)\otimes u_i^* + \sum_{k=1}^t T(v_k)\otimes v_k^*-\sum_{i=1}^s u^{\ast}_{i}\otimes T(u^{i}) -\sum_{k=1}^t v_k^* \otimes T(v_k).$$
Thus
{\small
\begin{eqnarray*}
 [r_{12},r_{13}] &=&\sum_{i,p=1}^s
\{[T(u_i), T(u_p)]\otimes u_i^*\otimes u_p* -\rho^*
(T(u_i))u_p^*\otimes u_i^*\otimes T(u_p)\\
&& +\rho^*
(T(u_p))u_i^*\otimes
T(u_i)\otimes u_p^* \}\\
&&+\sum_{i=1}^{s}\sum_{q=1}^{t}
\{[T(u_i), T(v_q)]\otimes u_i^*\otimes v_q* -\rho^*
(T(u_i))v_q^*\otimes u_i^*\otimes T(v_q)\\
&&+ \rho^*
(T(v_q))u_i^*\otimes
T(u_i)\otimes v_q^* \}\\
&&+\sum_{k,q=1}^{t}
\{[T(v_k), T(v_q)]\otimes v_k^*\otimes v_q* -\rho^*
(T(v_k))v_q^*\otimes v_k^*\otimes T(v_q)\\
&&+ \rho^*
(T(v_q))v_k^*\otimes
T(v_k)\otimes v_q^* \}\\
&&+\sum_{p=1}^{s}\sum_{k=1}^{t}
\{[T(v_k), T(u_p)]\otimes v_k^*\otimes u_p* -\rho^*
(T(v_k))u_p^*\otimes v_k^*\otimes T(u_p)\\
&&+ \rho^*
(T(u_p))v_k^*\otimes
T(v_k)\otimes u_p^*\},
\end{eqnarray*}
\begin{eqnarray*}
[ r_{12},r_{23}] &=&\sum_{i,p=1}^s \{- T(u_i)\otimes \rho^* (T(u_p))u_i^*\otimes u_p^*- u_i^*\otimes [T(u_i), T(u_p)]\otimes
u_p^*\\
&&+ u_i^*\otimes \rho^*
(T(u_i))u_p^*\otimes
T(u_p)\}\\
&&+\sum_{i=1}^{s}\sum_{q=1}^t \{- T(u_i)\otimes \rho^* (T(v_q))u_i^*\otimes v_q^*- u_i^*\otimes [T(u_i), T(v_q)]\otimes
v_q^*\\
&&+ u_i^*\otimes \rho^*
(T(u_i))v_q^*\otimes
T(v_q)\}\\
&&+\sum_{k,q=1}^t \{- T(v_k)\otimes \rho^* (T(v_q))v_k^*\otimes v_q^*- v_k^*\otimes [T(v_k), T(v_q)]\otimes
v_q^*\\
&&+ v_k^*\otimes \rho^*
(T(v_k))v_q^*\otimes
T(v_q)\}\\
&&+\sum_{p=1}^{s}\sum_{k=1}^t \{- T(v_k)\otimes \rho^* (T(u_p))v_k^*\otimes u_p^*- v_k^*\otimes [T(v_k), T(u_p)]\otimes
u_p^*\\
&&+ v_k^*\otimes \rho^*
(T(v_k))u_p^*\otimes
T(u_p)\},
\end{eqnarray*}
\begin{eqnarray*}
[r_{13},r_{23}] &=&\sum_{i,p=1}^s \{ T(u_i)\otimes u_p^*\otimes \rho^*(T(u_p))u_i^*- u_i^*\otimes T(u_p)\otimes
\rho^* (T(u_i))u_p^*\\
&&+ u_i^*\otimes u_p^*\otimes [T(u_i),
T(u_p)]\}\\
&&+\sum_{i=1}^{s}\sum_{q=1}^t \{ T(u_i)\otimes v_q^*\otimes \rho^*(T(v_q))u_i^*- u_i^*\otimes T(v_q)\otimes
\rho^* (T(u_i))v_q^*\\
&&+ u_i^*\otimes v_q^*\otimes [T(u_i),
T(v_q)]\}\\
&&+\sum_{k,q=1}^t \{ T(v_k)\otimes v_q^*\otimes \rho^*(T(v_q))v_k^*- v_k^*\otimes T(v_q)\otimes
\rho^* (T(v_k))v_q^*\\
&&+ v_k^*\otimes v_q^*\otimes [T(v_k),
T(v_q)]\}\\
&&+\sum_{p=1}^{s}\sum_{k=1}^t \{ T(v_k)\otimes u_p^*\otimes \rho^*(T(u_p))v_k^*- v_k^*\otimes T(u_p)\otimes
\rho^* (T(v_k))u_p^*\\
&&+ v_k^*\otimes u_p^*\otimes [T(v_k),
T(u_p)]
\}.
\end{eqnarray*}
}
On the other hand, from the definition of dual representation, we know
$$\rho^*(T(u_p))u_i^*=-\sum_{j=1}^s u_i^*(\rho(T(u_p))u_j) u_j^*.$$
Thus
\begin{eqnarray*}
-\sum_{i,p=1}^s T(u_i)\otimes \rho^*(T(u_p))u_i^*\otimes
u_p^*&=&\sum_{i,p=1}^s T(u_i)\otimes
[\sum_{j=1}^s u_i^*(\rho(T(u_p))u_j) u_j^*]\otimes u_p^*\\
&=&\sum_{i,p=1}^s \sum_{j=1}^s u_j^*(\rho(T(u_p))u_i) T(u_j)\otimes
u_i^*\otimes u_p^*\\
&=&\sum_{i,p=1}^s T(\sum_{j=1}^s (u_j^*(\rho(T(u_p))u_i) u_j))\otimes
u_i^*\otimes u_p^* \\
&=&\sum_{i,p=1}^s T(\rho (T(u_p))u_i)\otimes u_i^*\otimes u_p^*.
\end{eqnarray*}
Therefore
\begin{eqnarray*}
&&[r_{12},r_{13}]+[r_{12},r_{23}]+[r_{13},r_{23}]\\
&&=\sum_{i,p=1}^s\{
([T(u_i),T(u_p)]+T(\rho(T(u_p))u_i)-T(\rho(T(u_i))u_p))\otimes u_i^*\otimes
u_p^*\\
&&\quad -u_i^*\otimes ([T(u_i),T(u_p)]+T(\rho(T(u_p))u_i)-T(\rho(T(u_i))u_p))\otimes
u_p^*\\
&&\quad + u_i^*\otimes u_p^*\otimes
([T(u_i),T(u_p)]+T(\rho(T(u_p))u_i)-T(\rho(T(u_i))u_p))\}\\
&&\quad+\sum_{i=1}^s\sum_{q=1}^{t}\{
([T(u_i),T(v_q)]+T(\rho(T(v_q))u_i)-T(\rho(T(u_i))v_q))\otimes u_i^*\otimes
v_q^*\\
&&\quad +u_i^*\otimes (-[T(u_i),T(v_q)]- T(\rho(T(v_q))u_i)+T(\rho(T(u_i))v_q))\otimes
v_q^*\\
&&\quad - u_i^*\otimes v_q^*\otimes
(-[T(u_i),T(v_q)]-T(\rho(T(v_q))u_i)+T(\rho(T(u_i))v_q))\}\\
&&\quad+\sum_{k,q=1}^{t}\{
([T(v_k),T(v_q)]+T(\rho(T(v_q))v_k)-T(\rho(T(v_k))v_q))\otimes v_k^*\otimes
v_q^*\\
&& \quad+v_k^*\otimes (-[T(v_k),T(v_q)]- T(\rho(T(v_q))v_k)+T(\rho(T(v_k))v_q))\otimes
v_q^*\\
&&\quad - v_k^*\otimes v_q^*\otimes
(-[T(v_k),T(v_q)]+T(\rho(T(v_k))v_q)-T(\rho(T(v_q))v_k))\}\\
&&\quad+\sum_{p=1}^s\sum_{k=1}^{t}\{
([T(v_k),T(u_p)]+T(\rho(T(u_p))v_k)-T(\rho(T(v_k))u_p))\otimes v_k^*\otimes
u_p^*\\
&&\quad -v_k^*\otimes ([T(v_k),T(u_p)]+T(\rho(T(u_p))v_k)-T(\rho(T(v_k))u_p))\otimes
u_p^*\\
&&\quad + v_k^*\otimes u_p^*\otimes
([T(v_k),T(u_p)]+T(\rho(T(u_p))v_k)-T(\rho(T(v_k))u_p))\}.
\end{eqnarray*}
Obviously, $r$ is a solution of super MYBE in  $A\ltimes_{\rho^*}V^*$
if and only if $T$ is a super $\mathcal{O}$-operator associated to $(V, \rho)$.
\end{proof}

\begin{prop}
Let $(A, [\c,\c])$ be a Malcev superalgebra and $r\in A \otimes A$ be  skew-supersymmetric and nondegenerate with $|r|=0$. Then $r$ is
a  solution of  the super MYBE in $A$ if and only if the bilinear form $\omega$ on $A$ given by
\begin{equation}
\omega(x,y)= \langle r^{-1}(x), y\rangle, \forall x, y\in A,
\label{Rsymplectic}
\end{equation}
is a symplectic form.
\end{prop}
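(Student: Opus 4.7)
The plan is to reduce the equivalence to Theorem~\ref{mybe==dualoop} by viewing $r$ as an invertible even linear map $r : A^{*} \to A$ and then translating the resulting super $\mathcal{O}$-operator identity into the $2$-cocycle condition for $\omega$. Since $r$ is nondegenerate with $|r|=0$, the induced map $r : A^{*} \to A$ is an even bijection, so $\varphi := r^{-1} : A \to A^{*}$ is a well-defined even linear isomorphism; consequently $\omega(x,y) = \langle \varphi(x), y\rangle$ is a well-defined bilinear form, and its nondegeneracy is immediate since $\omega(x,\c)=0$ forces $\varphi(x)=0$ and hence $x=0$.

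Before addressing the main equivalence I would establish that $\omega$ is skew-supersymmetric. Writing $r = \sum_i x_i \otimes y_i$ and expanding $\sigma(r)=-r$ against the pairing convention \eqref{eq:idenrmap}, a brief sign chase yields $\langle a^{*}, r(b^{*})\rangle = -(-1)^{|a^{*}||b^{*}|}\langle b^{*}, r(a^{*})\rangle$ for all homogeneous $a^{*},b^{*}\in A^{*}$. Setting $a^{*}=\varphi(x)$ and $b^{*}=\varphi(y)$ then gives $\omega(x,y) = -(-1)^{|x||y|}\omega(y,x)$, as required.

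For the main equivalence, Theorem~\ref{mybe==dualoop} says that $r$ solves the super MYBE if and only if $r : A^{*} \to A$ is a super $\mathcal{O}$-operator associated to $(A^{*},ad^{*})$, namely
\[
[r(x^{*}), r(y^{*})] = r\bigl(ad^{*}(r(x^{*}))y^{*} - (-1)^{|x^{*}||y^{*}|}\, ad^{*}(r(y^{*}))x^{*}\bigr).
\]
Substituting $x=r(x^{*})$ and $y=r(y^{*})$, applying $\varphi$ to both sides, pairing with an arbitrary $z\in \mathcal{H}(A)$ and unpacking $ad^{*}$ via \eqref{dualrepresentation}, this identity transforms into
\[
\omega([x,y],z) = \omega(x,[y,z]) - (-1)^{|x||y|}\omega(y,[x,z]).
\]
I would then use the skew-supersymmetry of $\omega$ on the left-hand side together with the anti-commutativity $[x,z]=-(-1)^{|x||z|}[z,x]$ on the right to rewrite this as the cyclic $2$-cocycle identity \eqref{eq:2-cocycle}; conversely, that same manipulation run backward recovers the $\mathcal{O}$-operator identity from the $2$-cocycle condition, completing the equivalence.

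The main obstacle is the careful sign bookkeeping in the two sign chases above—deriving the skew-supersymmetry of $\omega$ from that of $r$, and symmetrizing the pairing identity into the cyclic $2$-cocycle form. Neither step is conceptually difficult once the grading conventions are fixed, but both require patience with the $\mathbb{Z}_{2}$-graded signs; once they are dispatched, the result follows directly from Theorem~\ref{mybe==dualoop}.
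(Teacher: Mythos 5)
Your proposal is correct and follows essentially the same route as the paper: both establish skew-supersymmetry of $\omega$ from that of $r$, and both convert the two-cocycle condition into the operator identity \eqref{rOp} by writing $x=r(a^{*})$, etc., unpacking $ad^{*}$ through the pairing, and invoking Theorem~\ref{mybe==dualoop}. The only cosmetic difference is the direction of the sign chase (you start from \eqref{rOp} and symmetrize to the cyclic form, while the paper starts from the cyclic sum and collapses it to a single pairing against $c^{*}$); your intermediate identity $\omega([x,y],z)=\omega(x,[y,z])-(-1)^{|x||y|}\omega(y,[x,z])$ checks out.
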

\begin{proof}
Since $r$ is nondegenerate, for $x, y, z\in A$, there exist $a^*
,b^*, c^*\in A^*$ such that $x = r(a^*), y =
r(b^*), z = r(c^*).$

In the one hand, using the fact that $r$ is skew-supersymmetric, then
\begin{eqnarray*}
   \omega(x,y)&=& \langle r^{-1}(x), y\rangle
   =\langle a^*, r(b^*)\rangle
   =-(-1)^{|a^*||b^*|} \langle b^*, r(a^*)\rangle\\
   &=& -(-1)^{|x||y|} \langle r^{-1}(y), x\rangle
   =-(-1)^{|x||y|} \omega(y,x).
\end{eqnarray*}
On the other hand, by Eq.\eqref{eq:2-cocycle} and the definition of the dual representation $ad^*$ of the adjoint representation, we
obtain
\begin{eqnarray*}
&&(-1)^{|x||z|}\omega(x, [ y, z]) + (-1)^{|y||x|}\omega(y, [ z, x])+(-1)^{|z||y|}\omega(z, [ x, y])\\
 &=& (-1)^{|x||z|} \langle r^{-1}(x), [ y, z]\rangle+ (-1)^{|y||x|}\langle r^{-1}(y), [ z, x]\rangle +(-1)^{|z||y|}\langle r^{-1}(z), [ x, y]\rangle\\
&=& (-1)^{|a^*||c^*|} \langle a^*, [  r(b^*), r(c^*)]\rangle+ (-1)^{|b^*||a^*|}\langle b^*, [ r(c^*), r(a^*)]\rangle +(-1)^{|c^*||b^*|}\langle c^*, [ r(a^*), r(b^*)]\rangle\\
&=& -(-1)^{|a^*|(|c^*|+|b^*|}\langle ad^*(r(b^*))a^*, r(c^*)\rangle + (-1)^{|c^*||a^*|}\langle ad^*(r(a^*))b^*,  r(c^*)\rangle +(-1)^{|c^*||b^*|}\langle c^*, [ r(a^*), r(b^*)]\rangle\\
&=& (-1)^{(|a^*|+|c^*|)|b^*|}\langle c^*, r(ad^*(r(b^*))a^*)\rangle - (-1)^{|c^*||b^*|}\langle c^*,  r(ad^*(r(a^*))b^*)\rangle +(-1)^{|c^*||b^*|}\langle c^*, [ r(a^*), r(b^*)]\rangle\\
&=& (-1)^{|c^*||b^*|}\langle c^*, (-1)^{|a^*||b^*|}r(ad^*(r(b^*))a^*)-r(ad^*(r(a^*))b^*)+[ r(a^*), r(b^*)]\rangle.
   \end{eqnarray*}
Hence $\omega$ is a symplectic form if and only if $r$ is
a  solution of  the super MYBE in $A$.
\end{proof}
\begin{cor}
Let $(A,\cdot)$ be a pre-Malcev superalgebra. Let $\{e_1,...,e_m,f_1,...,f_n\}$ be a basis of $A$ where $e_i \in A_{\bar{0}}$ and $f_j\in A_{\bar{1}}$ and let
$\{e_1^*,...,e_m^*,f_1^*,...,f_n^*\}$ be its dual basis with $e^{*} _{l}\in A^{*} _{\bar{0}}$
and $f^{*} _{k}
\in A^{*} _{\bar{1}}$. Then
\begin{equation}
    r=\sum_{i=1}^{m}(e_i\otimes e^{*} _{i}- e^{*} _{i}\otimes e_i )+\sum_{j=1}^{n} ( f_j\otimes f_j^{*}+ f_j^{*} \otimes f_j),
\end{equation}
is a  solution of the super MYBE in $A^{C}\ltimes_{ad^*}(A^{C})^{\ast}$.
\end{cor}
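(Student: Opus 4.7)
The plan is to deduce this corollary as a direct consequence of Theorem~\ref{mybe==oop} applied to the identity map, exactly as in the strategy of Proposition~\ref{pro:nsc}. First I would take $V = A^C$ and $\rho = ad$, and consider the linear map $T = \mathrm{id}\colon A \to A$, which by Proposition~\ref{pro:nsc} (its ``only if'' direction) is an invertible super $\mathcal{O}$-operator of $A^C$ associated to the adjoint representation $(A^C, ad)$, because the pre-Malcev product is a compatible pre-Malcev structure on $A^C$.

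Next I would rewrite $T$ as a tensor via the isomorphism $\mathrm{Hom}(V,A) \cong A \otimes V^*$ used in the proof of Theorem~\ref{mybe==oop}, which on our basis gives
\begin{equation*}
T = \sum_{i=1}^{m} e_i \otimes e_i^* + \sum_{j=1}^{n} f_j \otimes f_j^{*} \in A \otimes A^* \subset (A^C \ltimes_{ad^*} (A^C)^*)^{\otimes 2}.
\end{equation*}
Since $|T|=0$, the recipe of Theorem~\ref{mybe==oop} produces the associated element
\begin{equation*}
r = T - \sigma(T).
\end{equation*}
A short check of signs using the definition of $\sigma$ on homogeneous tensors yields $\sigma(e_i \otimes e_i^*) = e_i^* \otimes e_i$ (both factors even) and $\sigma(f_j \otimes f_j^*) = -f_j^* \otimes f_j$ (both factors odd), so
\begin{equation*}
r = \sum_{i=1}^{m}\bigl(e_i \otimes e_i^* - e_i^* \otimes e_i\bigr) + \sum_{j=1}^{n}\bigl(f_j \otimes f_j^* + f_j^* \otimes f_j\bigr),
\end{equation*}
which is exactly the element in the statement. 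One also verifies immediately from these sign computations that $r$ is skew-supersymmetric.

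Finally, I would invoke Theorem~\ref{mybe==oop}: because $T = \mathrm{id}$ is a super $\mathcal{O}$-operator of $A^C$ with respect to $(A^C, ad)$ and $|T|=0$, the corresponding $r = T - \sigma(T)$ is a skew-supersymmetric solution of the super MYBE in $A^C \ltimes_{ad^*} (A^C)^*$, which is the desired conclusion. The only delicate point is the sign bookkeeping in identifying $T - \sigma(T)$ with the stated $r$, where the odd--odd swap flips a sign and thereby produces the plus sign in the odd summand; everything else is a direct citation of the earlier results.
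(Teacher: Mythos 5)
Your overall route is the same as the paper's: the paper's proof is the single sentence that $T=\mathrm{id}$ is a super $\mathcal{O}$-operator of $A^{C}$ associated to $(A,ad)$, followed implicitly by an appeal to Theorem~\ref{mybe==oop}; your tensor identification $T=\sum_i e_i\otimes e_i^*+\sum_j f_j\otimes f_j^*$ and the sign computation $\sigma(f_j\otimes f_j^*)=-f_j^*\otimes f_j$ (which produces the plus sign in the odd summand and the skew-supersymmetry of $r=T-\sigma(T)$) are correct and are in fact more detail than the paper supplies. The genuine gap is in the step you quote from Proposition~\ref{pro:nsc}: the identity map is \emph{not} a super $\mathcal{O}$-operator of $A^{C}$ associated to the adjoint representation. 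Substituting $T=\mathrm{id}$ and $\rho=ad$ into \eqref{eq:operator} gives
\begin{equation*}
[x,y]\;=\;ad(x)y-(-1)^{|x||y|}ad(y)x\;=\;[x,y]-(-1)^{|x||y|}[y,x]\;=\;2[x,y],
\end{equation*}
which forces the bracket to vanish. (The statement of Proposition~\ref{pro:nsc} survives because it only asserts the existence of \emph{some} representation, but its proof, which you rely on, contains the same slip.)

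The representation for which $\mathrm{id}$ genuinely is an invertible super $\mathcal{O}$-operator is the left multiplication $L$ of the pre-Malcev product: the identity $[x,y]=L_xy-(-1)^{|x||y|}L_yx$ is precisely the definition of the commutator, and $L$ is a representation of $A^{C}$ by Theorem~\ref{thm:supercommutator}(ii). Feeding $T=\mathrm{id}$ and $\rho=L$ into Theorem~\ref{mybe==oop} then yields that the displayed $r$ is a skew-supersymmetric solution of the super MYBE in $A^{C}\ltimes_{L^{*}}(A^{C})^{*}$, not in $A^{C}\ltimes_{ad^{*}}(A^{C})^{*}$ as stated; the two ambient algebras coincide only when $L=ad$, which forces the bracket to be trivial. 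So your argument, like the paper's, proves the corollary only after replacing $ad^{*}$ by $L^{*}$; to make it airtight you should verify the $\mathcal{O}$-operator identity for $\mathrm{id}$ directly against $L$ rather than citing Proposition~\ref{pro:nsc} with $\rho=ad$.
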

\begin{proof}
It follows from the fact that $T = id$ is a super $\mathcal{O}$-operator of $A^{C}$ associated to the representation
$(A, ad)$.
\end{proof}



\begin{thebibliography}{999}
\bibitem {Abd el Malek}
Abd el Malek, A. : On the Frattini subalgebra of a Malcev algebra. Arch. Math.
(Basel), \textbf{37} (1981), 306-315.

\bibitem {Albuquerque}
 Albuquerque, H.,  Elduque, A.: Classification of Mal'tsev superalgebras of small dimensions. Algebra i Logika, 35 (1996), 629-654.

\bibitem {Albuquerque1}
 Albuquerque, H.: Malcev superalgebras, in: S. Gonzalez (Ed.), Non-Associative Al-
gebra and Its Applications, Kluwer Academic Publishers, Dordrecht, (1994).

\bibitem {Andruskiewitsch}
 Andruskiewitsch, N.: Lie superalgebras and Poisson-Lie supergroups, Abh. Math. Sem.
Univ. Hamburg 63 (1993) 147–163.

\bibitem {Bai2}
Bai, C. M.: A unified algebraic approach to classical Yang-Baxter equation,
J. Phys. A: Math. Theor. \textbf{40} (2007), 11073-11082.

\bibitem{BelavinDrinfeld}
Belavin, A. A.  and Drinfeld, V. G. : Solutions of the classical Yang-Baxter equation for simple Lie
algebras, \emph{Funct. Anal. Appl.}, \textbf{16} (1982), 159-180.

\bibitem{Berezin}
Berezin, F. A. , Kac, G. I.: Math. Sbornik 82 (Russian), (1970), 343-351.

\bibitem{Bracken}
Bracken, A. J. ,  Gould, M. D. and  Zhang, R. B.: Quantum supergroups and solutions
of the Yang-Baxter equation, Modern Phys. Lett. A 5 (1990) 831–840.

\bibitem{Faddeev}
 Faddeev, L. D.,   Takhtajan, L.: The quantum inverse scattering method of the
inverse problem and the Heisenberg XYZ model, Russ. Math. Surv. 34 (1979) 11–68.

\bibitem{Faddeev1}
 Faddeev  L. D.,  Takhtajan, L.: Hamiltonian methods in the theory of solitons
(Springer, Berlin, 1987).

\bibitem {Floreanini}
Floreanini, R. , Leites,  D. A. and  Vinet, L.: On the defining relations of quantum superalgebras,
Lett. Math. Phys. 23 (1991) 127–131.

\bibitem {Gould}
Gould, M. D. ,  Zhang, R. B. and  Bracken, A. J.: Lie bi-super algebras and the graded
classical Yang-Baxter equation, Rev. Math. Phys. 3 (1991) 223–240.

\bibitem{HarrathiMabroukNcibSilvestrov:post-Malcevalg}
Harrathi, F., S. Mabrouk, O. Ncib, and S. Silvestrov. Malcev Yang-Baxter equation, weighted $\mathcal {O} $-operators on Malcev algebras and post-Malcev algebras.Hacettepe J.  Maths.  Stats (2022): 1-21.

\bibitem{HarrathiMabroukNcibSilvestrov:Kupershmidt operators}
Harrathi, F.,  Mabrouk,S.,  Ncib, O. and  Silvestrov, S.: Kupershmidt operators on Hom-Malcev algebra. Int. J. Geom. Methods Mod. Phys. 20, no. 03 (2023): 2350046.

\bibitem{HarrathiMabroukNcibSilvestrov:Dendrification}
Harrathi, F., Mabrouk, S., Ncib, O. and Silvestrov, S.,: Dendrification of Hom-Malcev algebras. arXiv preprint arXiv:2205.11002(2022)(appear in Advances in Applied Clifford Algebras)(2023).

\bibitem{kerdman}
Kerdman, F. S.: Analytic Moufang loops in the large, Alg. Logic \textbf{18} (1980), 325-347.

\bibitem{kuzmin71:conectMalcevanMoufangloops}
Kuzmin, E. N.: The connection between Malcev algebras and analytic Moufang loops, Alg. Logic,
\textbf{10} (1971), 1-14.

\bibitem{kuzmin68:Malcevalgrepr}
Kuzmin, E. N.: Malcev algebras and their representations, Alg. Logic, \textbf{7} (1968),
233-244.

\bibitem{Li}
 Li, H.:Local Systems of Vertex Operators, Vertex Superalgebras and Modules, J.
Pure Appl. Algebra, 109 (1996), 143-195.

\bibitem{Madariaga}
Madariaga, S.: Splitting of operations for alternative and Malcev structures,
\emph{Comm. Algebra,} \textbf{45}(1)(2014), 183-197.

\bibitem{Mahaligesshwara}
 Mahaligesshwara, N.: A note on Malcev algebras and quasi-Lie algebras. Yokohama
Math. J. 22 (1974), 25-29.

\bibitem{Malcev55:anltcloops}
Malcev, A. I.: Analytic loops, Mat. Sb. \textbf{36}(3)(1955), 569-576.

\bibitem{Sagle}
Sagle, A. A.: Malcev algebras, Trans. Amer. Math. Soc. \textbf{101} (1961), 426-458.


\bibitem{Semenov}
Semenov-Tian-Shansky, M. A. : Quantum and classical Integrable systems, Lect. Not.
Phys., Vol. 495 (Springer-Verlag, Berlin, 1997), pp. 314–377.

\bibitem{Shestakov}
 Shestakov, I. P.: Simple Mal'tsev superalgebras. (Russian) Mat. Sb., 182 (1991), 1357-
1366; translation in Math. USSR-Sb. 74 (1993), 101-110.

\bibitem{Wang}
Wang, Y., Hou, D. and Bai, C., 2010. Operator forms of the classical Yang–Baxter equation in Lie superalgebras. International Journal of Geometric Methods in Modern Physics, 7(04), pp.583-597.

\bibitem {Yamane}
Yamane, H.: Universal R-matrices for quantum groups associated to simple Lie superalgebras,
Proc. Japan Acad. A 67 (1991) 108–112.

\bibitem {Yamane1}
 Yamane,  H.: Quantized enveloping algebra associated with simple Lie superalgebras
and their universal R-matrices, Publ. RIMS. Kyoto Univ. 30 (1994) 15–87.

\bibitem {Zhang}
 Zhang, R. B. ,  Gould , M. D. and  Bracken, A. J.: Solutions of the graded classical Yang-
Baxter equation and integrable models, J. Phys. A: Math. Gen. 24 (1991) 1185–1197.
\end{thebibliography}
\end{document}